\DeclareMathOperator{\Tr}{Tr}
\newtheorem{theorem}{Theorem}[section]
\newtheorem{lemma}[theorem]{Lemma}
\newtheorem{proposition}[theorem]{Proposition}
\newtheorem{definition}[theorem]{Definition}
\title{Continuity of core entropy of quadratic polynomials}
\author{Giulio Tiozzo}
\begin{document}

\begin{abstract}The core entropy of polynomials, recently introduced by W. Thurston, 
is a dynamical invariant which can be defined purely in combinatorial terms, and 
provides a useful tool to study parameter spaces of polynomials.
The theory of core entropy extends to complex polynomials the entropy theory 
for real unimodal maps: the real segment is replaced by an invariant tree, known as Hubbard tree, 
which lives inside the filled Julia set. We prove that the core entropy of quadratic polynomials
varies continuously as a function of the external angle, answering a question of Thurston.
\end{abstract}

\maketitle

\section{Introduction}

Recall a polynomial $f : \mathbb{C} \to \mathbb{C}$ is \emph{postcritically finite} if the forward orbits of its critical points are finite. Then the filled Julia set of $f$ contains a forward invariant, finite topological tree, called the \emph{Hubbard tree} \cite{DH}. %Following Thurston, we give the 

\begin{definition}
% Then the \emph{Hubbard tree} of $f$ is the minimal
%forward-invariant, connected set which contains the critical points, and 
The \emph{core entropy} of $f$ is the topological entropy 
of the restriction of $f$ to its Hubbard tree.
\end{definition}

%\begin{definition}
%In this paper, 
We shall restrict ourselves to quadratic polynomials. 
Given $\theta \in \mathbb{Q}/\mathbb{Z}$, the external ray at angle $\theta$ determines a postcritically finite parameter $c_\theta$ in 
the Mandelbrot set \cite{DH}. We define $h(\theta)$ to be the core entropy of $f_\theta(z) := z^2 + c_\theta$. The main goal of this paper is to prove the following result:
%\end{definition}

\begin{theorem} \label{T:main}
The core entropy function $h : \mathbb{Q}/\mathbb{Z} \to \mathbb{R}$ %(\theta)$ 
extends to a continuous function from $\mathbb{R}/\mathbb{Z}$ to $\mathbb{R}$.
\end{theorem}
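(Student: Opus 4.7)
The plan is to express $h(\theta)$ as the spectral radius of a non-negative matrix (or operator) depending combinatorially on $\theta$, and to split the continuity assertion into lower and upper semicontinuity, each established by a different structural feature of the model.

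\medskip

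\emph{Combinatorial model.} For $\theta \in \mathbb{Q}/\mathbb{Z}$, the postcritical set of $f_\theta$ is attached to external rays whose angles form the forward orbit of $\theta$ under the doubling map $d : x \mapsto 2x \bmod 1$. The complementary arcs, together with the pairing induced by the critical leaf $\{\theta/2,(\theta+1)/2\}$, determine the Hubbard tree combinatorially and present $f_\theta$ as a Markov map; its incidence matrix $A_\theta$ has leading eigenvalue $e^{h(\theta)}$. I would first describe $A_\theta$ intrinsically in terms of $\theta$, so that the same recipe produces a (possibly infinite) non-negative matrix for every $\theta \in \mathbb{R}/\mathbb{Z}$, and set $h(\theta)$ to be the exponential growth rate of admissible walks — this matches the Hubbard-tree definition at rational angles.

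\medskip

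\emph{Lower semicontinuity.} If $\theta_n \to \theta$, then for every $N$ the first $N$ iterates of $d$ applied to $\theta_n$ converge to those of $\theta$. Hence the finite sub-matrix of $A_\theta$ indexed by this initial block embeds as a sub-matrix of $A_{\theta_n}$ for $n$ large, so $\liminf_n \rho(A_{\theta_n}) \ge \rho_N(A_\theta)$; letting $N \to \infty$ gives $\liminf_n h(\theta_n) \ge h(\theta)$. This step is essentially soft and relies only on continuity of $d$ on finite time scales.

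\medskip

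\emph{Upper semicontinuity.} This is where the real work is. The danger is that, for $\theta_n$ near $\theta$, entropy is concentrated in combinatorial structure which collapses in the limit. I would attack this via a \emph{second, dual} description of $h(\theta)$ — for instance as the exponential growth rate of a sequence of counts (biaccessible preimages of a reference point, or cuts of the tree by the lamination) defined by a \emph{closed} compatibility condition with $\theta$. Because closed conditions pass to limits, such counts are upper semicontinuous in $\theta$, giving $\limsup_n h(\theta_n) \le h(\theta)$; matching the two descriptions at rational $\theta$ then upgrades this to the full theorem via density of $\mathbb{Q}/\mathbb{Z}$.

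\medskip

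\emph{Main obstacle.} Upper semicontinuity is the hard part: lower semicontinuity follows from persistence of finite truncations, but upper semicontinuity demands that no growth is hidden in combinatorial data which disappears in the limit. Particular care is needed at angles where the tree degenerates — Misiurewicz accumulation points and parabolic angles bounding hyperbolic components — and the success of the plan hinges on choosing the dual growth quantity so that the "closed admissibility" condition is strong enough to rule out such entropy leaks while still being weak enough to equal $h(\theta)$ at every rational parameter.
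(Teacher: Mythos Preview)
Your proposal is a strategy outline rather than a proof, and the crucial step---upper semicontinuity---is left entirely unspecified. Saying you would ``attack this via a second, dual description'' with a ``closed admissibility condition'' is not an argument; it is a placeholder for one. Nothing in the proposal explains what quantity is to be counted, why its admissibility condition is closed, or why it agrees with $h(\theta)$ at rational angles. Since you yourself identify this as the hard part, the proposal as written has a genuine gap at its core.

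Even the lower-semicontinuity half is more delicate than you suggest. The claim that ``the finite sub-matrix of $A_\theta$ indexed by this initial block embeds as a sub-matrix of $A_{\theta_n}$ for $n$ large'' fails precisely at purely periodic angles: if $\theta$ has period $p$, then $x_p(\theta)$ sits on the boundary of the partition $P_\theta$, and an arbitrarily small perturbation flips the separated/non-separated label of every pair involving a multiple of $p$. So the combinatorial data does not stabilize on any finite window as $\theta'\to\theta$; the one-sided limits $\mathcal{W}_{\theta^+}$ and $\mathcal{W}_{\theta^-}$ exist but differ from $\mathcal{W}_\theta$, and one must show separately that all three yield the same growth rate. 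The paper handles this with a dedicated argument (its Lemma on purely periodic wedges and its finite-model comparison). By contrast, the paper never splits into lower and upper semicontinuity at all: it packages the cycle combinatorics into a single holomorphic function $P_\theta(t)$ in the unit disk (the spectral determinant), proves a uniform bound $(2n)^{\sqrt{2n}}$ on the number of multi-cycles of length $n$ so that $P_{\theta_n}\to P_\theta$ uniformly on compact sets once the wedges stabilize, and then reads off continuity of the smallest real zero---hence of $h$---from Rouch\'e's theorem. That analytic device is what replaces your missing ``dual description.''
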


The theorem answers a question of W. Thurston, who first introduced and explored the core entropy of polynomials.
As Thurston showed, the core entropy function can be defined purely combinatorially, but it displays a rich fractal structure (Figure \ref{F:core}), which reflects the underlying geometry of the Mandelbrot set.

\begin{figure}[h!]
 \label{F:core}
\fbox{
\includegraphics[width = 0.8 \textwidth]{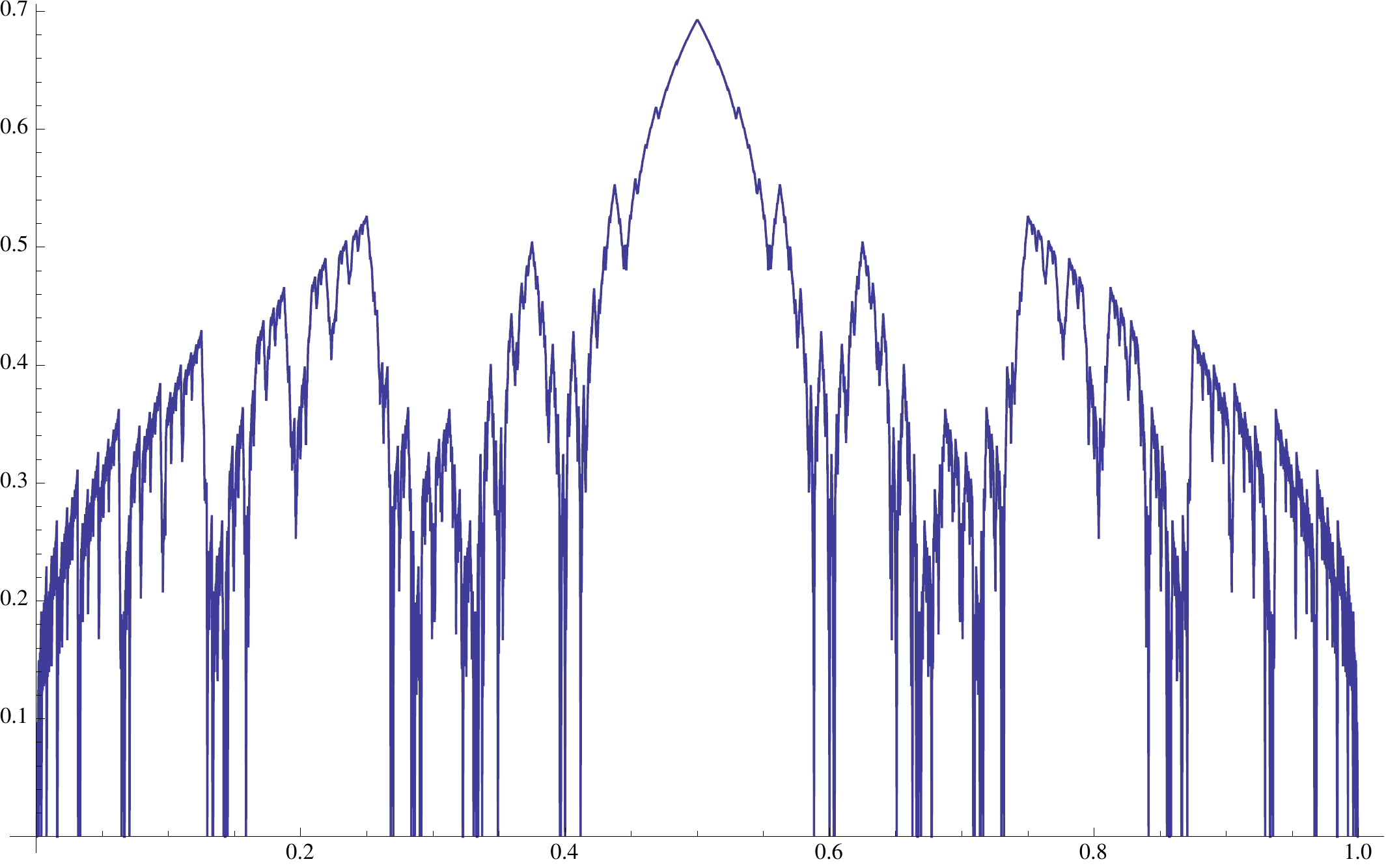}}
\caption{The core entropy $h(\theta)$ of quadratic polynomials as a function of the external angle $\theta$.}
\end{figure}

The concept of core entropy generalizes to complex polynomials the entropy theory 
%(topological) 
of real quadratic maps, whose monotonicity and continuity go back to Milnor and Thurston \cite{MT}:
indeed, the invariant real segment is replaced by the invariant tree, which captures all 
the essential dynamics.
On the topological side, Hubbard trees have been introduced to classify postcritically finite maps \cite{DH}, \cite{Po}, 
and their entropy provides a new tool to study the parameter space of polynomials: for instance, 
the restriction of $h(\theta)$ to non-real veins of the Mandelbrot set is 
also monotone \cite{Li}, which implies that the lamination for the Mandelbrot set can be reconstructed 
by looking at level sets of $h$. 
Note by comparison that the entropy of $f_\theta$ on its Julia set is constant, 
independent of $\theta$, hence it does not give information on the parameter.

Furthermore, the value $h(\theta)$ equals, up to a constant factor, the Hausdorff dimension of the set of biaccessible angles for $f_\theta$ (\cite{BS},  \cite{Ti}). For dendritic Julia sets, the core entropy also equals the asymptotic stretch factor of $f_\theta$ as a rational map
\cite{Th1}.

%On the other hand, the core entropy is clearly not monotone as a function of the external angle $\theta$. 

%The present paper studies the core entropy as a function of the external angle: 
%note that describing the topology of the Hubbard tree as a function of the external angle is quite involved, 
%and the topology changes wildly when $\theta$ is perturbed.

%Moreover, values of the core entropy at a point is related to the Hausdorff dimension of angles landing on the vein.

%\medskip
\subsection{Strategy of proof}
A rational external angle $\theta$ determines a postcritically finite map $f_\theta$, which has a finite Hubbard tree $T_\theta$. 
The simplest way to compute its entropy is to compute the Markov transition matrix for the map $f_\theta$ acting on 
$T_\theta$, and take (the logarithm of) its leading eigenvalue: however, this requires to know the topology of $T_\theta$, which 
changes wildly after small perturbations of $\theta$.

In this paper, we shall by-pass this issue by leveraging an algorithm, devised by Thurston, 
which considers instead a larger matrix, whose entries are \emph{pairs} of postcritical angles (see Section \ref{S:algo}); no knowledge 
of the topology of the Hubbard tree is required.
In order to prove $h$ is continuous, we develop an infinite version of such algorithm, which is defined for \emph{any} angle $\theta \in \mathbb{R}/\mathbb{Z}$.
In particular, instead of taking the leading eigenvalue of a transition matrix, we shall encode the possible transitions in a directed graph, 
which will now have countably many vertices. 
By taking the \emph{spectral determinant} of such graph (see Section \ref{S:spectral} for a definition), for each angle $\theta \in \mathbb{R}/\mathbb{Z}$,  we construct a power series $P_\theta(t)$ which converges in the unit disk and 
such that its ``smallest zero" $s(\theta)$ is related to the core entropy by 
$$h(\theta) = - \log s(\theta).$$
We then produce an algorithm to compute each coefficient of the Taylor expansion of $P_\theta(t)$, 
and show that essentially these coefficients vary continuously with $\theta$: the result follows by Rouch\'e's theorem.

As a corollary of our method, we shall prove that the entropy function is actually H\"older continuous at angles $\theta$ such that $h(\theta) > 0$ (using renormalization, it can be proven that $h$ is not H\"older continuous where $h(\theta) = 0$).

%\subsection{Graphs with bounded cycles}
\medskip

\subsection{Growth rate of graphs with bounded cycles}
On the way to our proof, we shall develop a few general combinatorial tools to deal with growth rates of countable graphs, 
which may be of independent interest. 
We define a countable graph $\Gamma$ with bounded (outgoing) degree to have \emph{bounded cycles} if it has finitely many closed paths of any given length. In this case, we define the \emph{growth rate} of $\Gamma$ 
as the growth rate with $n$ of the number of closed paths of length $n$ (see Definition \ref{d:growth}); as it turns out (Theorem \ref{T:rootofP}), the inverse of the growth rate equals the smallest zero of the following function $P(t)$, constructed by counting the multi-cycles in the graph:
$$P(t) := \sum_{\gamma \textup{ multi-cycle}} (-1)^{C(\gamma)}\  t^{\ell(\gamma)},$$
where $C(\gamma)$ is the number of components of the multi-cycle $\gamma$, and $\ell(\gamma)$ is its length.
(For the definition of multi-cycle, see Section \ref{s:graphs}; note that our graphs have finite outgoing degree but possibly infinite ingoing degree, hence the 
adjacency operator has infinite $\ell^2$-norm and the usual spectral theory (see e.g. \cite{MW}) does not apply).
%The spectral determinant $P(t)$ we use is an infinite version of the \emph{clique polynomial} 
%used in \cite{Mc} to study finite directed graphs with small entropy. 

%\subsection{Wedges}
We then define a general combinatorial object, called \emph{labeled wedge}, which consists of pairs 
of integers which can be labeled either as being \emph{separated} (representing two elements of the postcritical 
set which lie on opposite sides of the critical point) or \emph{non-separated}; to such object 
we associate an infinite graph, and prove that it has bounded cycles, hence one can apply the theory developed in the first part 
(see Theorem \ref{T:zero}). %The first two parts can be of interest independently of the applications to complex dynamics.

\medskip

\subsection{Application to core entropy}
Finally, we shall apply these combinatorial techniques to the core entropy (Section \ref{S:core}); indeed, we associate to any external angle $\theta$ a 
labeled wedge $\mathcal{W}_\theta$, hence an infinite graph $\Gamma_\theta$, and verify that:
\begin{enumerate}
\item
the growth rate of $\Gamma_\theta$ 
varies continuously as a function of $\theta$ (Theorem \ref{T:newmain}); 
\item
the growth rate of $\Gamma_\theta$ coincides with the core entropy for rational angles (Theorem \ref{T:coincide}).
\end{enumerate}
Let us remark that, as a consequence of monotonicity along veins and Theorem \ref{T:main}, 
the continuous extension we define also coincides with the core entropy for parameters which are not necessarily 
postcritically finite, but for which the Julia set is locally connected and the Hubbard tree is topologically finite.

\medskip %\subsection{Remarks}

\subsection{Remarks}
The core entropy of polynomials has been introduced by W. Thurston around 2011 (even though there are earlier related results, e.g. \cite{Pe}, \cite{AF}, \cite{Li}) but most of the theory is yet unpublished (with the exception of Section 6 in \cite{Th2}).
Several people 
%Thurston himself never published any paper on it, but several participants of his seminar 
%in 2011 
are now collecting his writings and correspondence into a foundational paper 
%Indeed, are writing the foundations 
%based on the notes from Thurston's Cornell seminar in 2011-12 
\cite{people}.
In particular, the validity of Thurston's algorithm has been proven by Tan L., Gao Y., and W. Jung (see \cite{Ga}, \cite{Ju}).
%indeed, and continuous (\cite{Ti}, \cite{Ju}).
Continuity of the core entropy along principal veins in the Mandelbrot set is proven by the author in \cite{Ti}, and 
along all veins by W. Jung \cite{Ju}. 
Note that the previous methods used for veins do not easily generalize, since the topology of the tree is 
constant along veins but not globally.
After \cite{MT}, alternative proofs of monotonicity and continuity of entropy for real maps  are given in \cite{Do}, \cite{Ts}
(the present proof independently yields continuity).
Biaccessible external angles and their dimension have been discussed in 
\cite{Za}, \cite{Zd}, \cite{Sm}, \cite{BS}, \cite{MS}.
%; since the core entropy 
%is proportional to the dimension of the set of biaccessible angles, we also prove that, at least for rational angles, the biaccessibility %dimension of $f_\theta$ is continuous in $\theta$.
 
%Independent work (in progress, at the moment of writing) on the continuity issue is due to D. Dudko and D. Schleicher.
%Another reason of interest in the core entropy is that it equals, modulo a factor of $\log 2$, the Hausdorff dimension
%of the set of bi-accessible angles, at least in the case of locally connected Julia sets for which the Hubbard tree is 
%a topologically finite.
%, and the kneading theory of real unimodal maps. 

The method we use to count closed paths in the graph bears many similarities with the theory of \emph{dynamical zeta functions} \cite{AM}, and several forms of the spectral determinant are used in thermodynamic formalism (see e.g. \cite{Ru}, \cite{PP}). 
Moreover, the spectral determinant $P(t)$ we use is an infinite version of the \emph{clique polynomial} 
used in \cite{Mc} to study finite directed graphs with small entropy.

%Let us point out that the approach to continuity along veins is quite different from the approach we are taking in the present paper: 
%in fact, along any vein the topology of the Hubbard tree is controlled, while as a function of the external angle it varies wildly. For this %reason, obtaining continuity as a function of the external angle by patching together the results along different veins appears to be hard: %in this paper instead, we use a different algorithm to compute the entropy, which uses directly the external angle without looking at the %topology of the Hubbard tree.

\subsection{The algorithm} \label{S:algo}

The motivation for our combinatorial construction is Thurston's algorithm to compute the core entropy for rational angles, 
which we shall now describe.

Let $\theta \in \mathbb{Q}/\mathbb{Z}$ a rational number $\mod 1$. Then the external ray at angle $\theta$ in the Mandelbrot 
set lands at a Misiurewicz parameter, or at the root of a hyperbolic component: let $f$ denote the corresponding postcritically finite quadratic map.
We shall call $c_0$ the critical point of $f$, and for each $i \geq 1$, $c_i := f^i(c_0)$ the $i^{th}$ iterate of the critical point.
Recall that the \emph{Hubbard tree} of $f$ is the union of the regulated arcs $[c_i, c_j]$ for all $i, j \geq 0$
(for more details, see \cite{DH}).
In the postcritically finite case, it is a finite topological tree, 
and it is forward-invariant under $f$  (see Figure \ref{F:tree}).

\begin{figure} 
\fbox{\includegraphics[width= 0.8\textwidth]{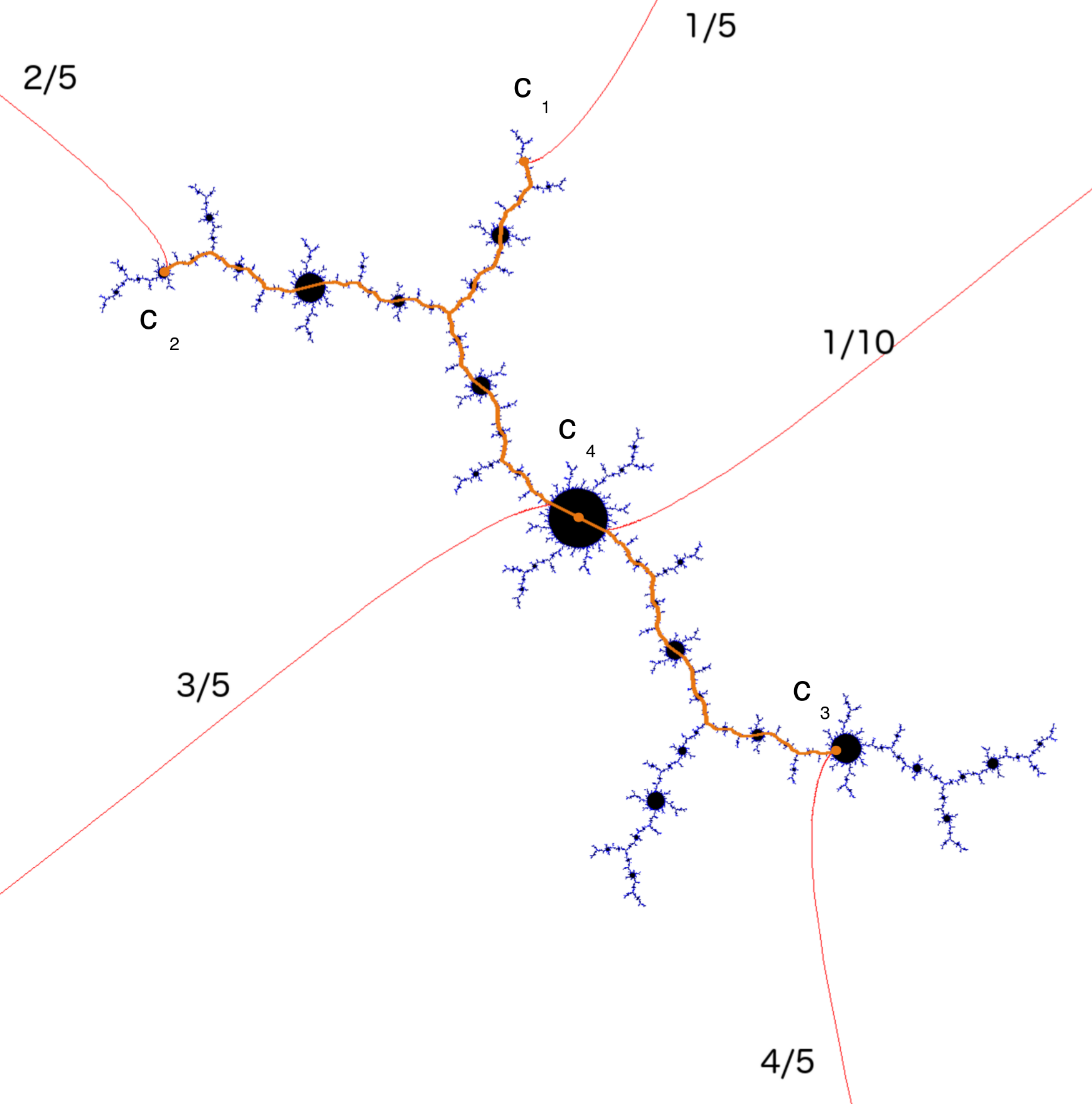}}
\caption{The Hubbard tree for the postcritically finite quadratic polynomial with characteristic angle $\theta = 1/5$.
In this case, the critical point has period $4$, and the tree is a tripod which is the union of the two regulated arcs $[c_1, c_3]$ and $[c_2, c_3]$.}
\label{F:tree}
\end{figure}

It is possible to compute the core entropy of $f$ by writing the Markov transition matrix for the action 
of $f$ on the tree, and take the logarithm of its leading eigenvalue. However, 
given the external angle $\theta$ it is quite complicated to figure out the topology of the tree: 
the following algorithm by-passes this issue by looking at pairs of external angles.

In order to explain the algorithm in more detail, let us remark that a rational angle $\theta$ is eventually periodic under 
the doubling map $ g(x) :=  2x \mod 1$;  that is, there exist integers $p \geq 1$ and $q \geq 0$ such that the elements of the set 
$\mathcal{O} := \{\theta, 2\theta, \dots, 2^{p+q-1}\theta\}$ are all distinct modulo $1$, and $2^{p+q} \theta \equiv 2^q \theta \mod 1$. 
The elements of $\mathcal{O}$ will be called \emph{postcritical angles}; the number $p$
is called the \emph{period} of $\theta$, and $q$ is the \emph{pre-period}. If $q = 0$, we shall call $\theta$ \emph{purely periodic}.
Denote $P_\theta$ the partition of the circle $S^1= \mathbb{R}/\mathbb{Z}$ in the two intervals 
$$ S^1 = \left[\frac{\theta}{2}, \frac{\theta + 1}{2} \right) \cup \left[ \frac{\theta+1}{2} , \frac{\theta}{2} \right). $$
Moreover, for each $i$ let us denote $x_i(\theta) := 2^{i-1} \theta \mod 1$, which we see as a point in $S^1$.

Let us now construct a matrix $M_\theta$ which will be used to compute the core entropy.
Denote $V_\mathcal{O}$ the set of unordered pairs of (distinct) elements of  $\mathcal{O}$: this is a finite set, 
any element of which will be denoted $\{x_i, x_j\}$. We now define a linear map $M_\theta : \mathbb{R}^{V_\mathcal{O}} \to \mathbb{R}^{V_\mathcal{O}}$ by defining it on its basis vectors in the following way:
%the matrix $M_\theta$ will have rank $\#V_\mathcal{O}$, so its rows and columns will be labeled by $V_\mathcal{O}$.
%For each element $\{ x_i, x_j\}$ of $\mathcal{O}$, you have the following rule to construct the column matrix: 
\begin{itemize}
\item 
if $x_i$ and $x_j$ belong to the same element of the partition $P_\theta$, or at least one of them lies on the boundary,
we shall say that the pair $\{x_i, x_j\}$ is \emph{non-separated}, and define
$$M_\theta(\{x_i, x_j\}) := \{x_{i+1}, x_{j+1}\};$$
\item
if $x_i$ and $x_j$ belong to the interiors of two different elements of $P_\theta$, then we 
say that $\{x_i, x_j\}$ is \emph{separated}, and
define 
$$M_\theta(\{x_i, x_j\}) := \{x_1, x_{i+1}\} + \{x_1, x_{j+1}\}$$
(in order to make sure the formulas are defined in all cases, we shall set $\{x, y\} = 0$ whenever $x = y$).
\end{itemize}

By abuse of notation, we shall also denote $M_\theta$ the matrix representing the linear map $M_\theta$ in the 
standard basis. As suggested by Thurston in his correspondence, the leading eigenvalue of
 $M_\theta$ gives the core entropy:

\begin{theorem}[Thurston; Tan-Gao; Jung] \label{T:algo}
Let $\theta$ a rational angle.%, with period $p$ and pre-period $q$ under the doubling map.
Then the core entropy $h(\theta)$ of the quadratic polynomial of external angle $\theta$ 
is related to the largest real eigenvalue $\lambda$ of the matrix $M_\theta$ %$growth rate $\lambda(\theta)$ of $\Gamma^{FU}_\theta$  
by the formula
$$h(\theta) = \log \lambda.$$
\end{theorem}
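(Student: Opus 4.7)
The strategy is to relate $M_\theta$ to the Markov transition matrix of $f_\theta$ acting on a suitable refinement of the Hubbard tree $T_\theta$, via an explicit intertwining map. First I would subdivide $T_\theta$ at the critical orbit $c_0, c_1, \dots, c_{p+q}$, obtaining a finite Markov partition with edge set $E$; the incidence matrix $A : \mathbb{R}^E \to \mathbb{R}^E$ then satisfies $h(\theta) = \log \rho(A)$ by classical entropy theory for piecewise monotone Markov graph maps. Since $A$ and $M_\theta$ both have non-negative entries, Perron--Frobenius identifies their largest real eigenvalues with their spectral radii, so it suffices to establish $\rho(M_\theta) = \rho(A)$.

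To compare the two matrices, I would define a linear map $\phi : \mathbb{R}^{V_\mathcal{O}} \to \mathbb{R}^E$ sending
\[
\{c_i, c_j\} \longmapsto \sum_{e \subset [c_i, c_j]} e,
\]
the formal sum of edges along the regulated arc from $c_i$ to $c_j$ in $T_\theta$. The key claim is the intertwining $\phi \circ M_\theta = A \circ \phi$, which is a direct translation of the separated/non-separated dichotomy: a non-separated pair corresponds to an arc avoiding $c_0$, on which $f_\theta$ restricts as a homeomorphism onto $[c_{i+1}, c_{j+1}]$; a separated pair corresponds to an arc through $c_0$, decomposing as $[c_i, c_0] \cup [c_0, c_j]$ and mapped under $f_\theta$ (using $f_\theta(c_0) = c_1$) to $[c_1, c_{i+1}] \cup [c_1, c_{j+1}]$ --- exactly the rule defining $M_\theta$ on separated pairs. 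Because the image of $\phi$ is $A$-invariant and contains the positive Perron eigenvector of $A$, the intertwining immediately yields $\rho(A) \le \rho(M_\theta)$. For the reverse direction, the total weight of $M_\theta^n \{c_i, c_j\}$ can be identified with the number of edges covered by $f_\theta^n([c_i, c_j])$ counted with multiplicity, and this ``lap count'' grows no faster than $\rho(A)^n$ by the standard lap-number estimate for piecewise monotone Markov graph maps.

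The main obstacle is precisely the reverse inequality $\rho(M_\theta) \le \rho(A)$. The matrix $M_\theta$ lives on a larger space than $A$ --- many pairs $\{c_i, c_j\}$ are not adjacent in $T_\theta$ --- and one must rule out spurious large eigenvalues supported in $\ker \phi$. The cleanest route is to exhibit an $M_\theta$-invariant filtration whose associated graded pieces are either restrictions of $A$ or nilpotent, so that the spectral radius of $M_\theta$ coincides with that of its ``geometrically meaningful'' summand. Carrying out this combinatorial bookkeeping carefully is the technical content of the proofs in \cite{Ga}, \cite{Ju}, and the forthcoming \cite{people}.
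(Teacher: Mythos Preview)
Your approach and the paper's are essentially the same idea in two languages. The paper builds a \emph{topological} semiconjugacy: it realises the complete graph $G$ on the postcritical angles as a $1$-complex, defines a continuous map $\Phi : G \to T$ sending the edge $\{x_i,x_j\}$ onto the regulated arc $[c_i,c_j]$, lifts $f$ to a Markov graph map $L : G \to G$ whose transition matrix is exactly $M_\theta$, and then (in the purely periodic case) invokes the fact that a surjective, finite-to-one semiconjugacy preserves topological entropy to get $h(f,T)=h(L,G)=\log\lambda$. Your map $\phi$ is precisely the linearisation of this $\Phi$ at the level of $1$-chains, and your intertwining $\phi\, M_\theta = A\,\phi$ is the chain-level shadow of $\Phi\circ L = f\circ\Phi$. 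So the underlying construction and the key dichotomy (separated versus non-separated) coincide; the paper packages the hard inequality as ``finite-to-one semiconjugacy preserves entropy'', while you package it as a lap-count bound or an invariant filtration on $\ker\phi$.

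Two places in your sketch are doing more work than the wording suggests. First, ``the image of $\phi$ contains the positive Perron eigenvector of $A$'' is not automatic from the intertwining; it amounts to surjectivity of $\phi$ onto $\mathbb{R}^E$, which in the paper's language is the surjectivity of $\Phi$. This holds because the Hubbard tree is \emph{by definition} the union of the regulated arcs $[c_i,c_j]$, so every Markov edge appears in some arc, and one can isolate edges by inclusion--exclusion in the tree; but you should state this. Second, the intertwining itself relies on the equivalence between ``$\{x_i,x_j\}$ is separated by $P_\theta$'' and ``$c_0$ lies on $[c_i,c_j]$'', which is the nontrivial bridge from the combinatorics of external angles to the geometry of the tree; the paper takes this as the geometric meaning of the algorithm, but it deserves a sentence. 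Both the paper and you defer the pre-periodic case (where $\Phi$ can collapse arcs to points, equivalently $\phi$ can kill basis vectors) to \cite{Ga}, \cite{Ju}.
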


The explanation of this algorithm is the following.
Let $G$ be a complete graph whose vertices are labeled by elements of $\mathcal{O}$, 
and which we shall consider as a topological space.
(More concretely, we can take the unit disk and draw segments between all possible pairs of postcritical angles
on the unit circle: the union of all such segments is a model for $G$ (see Figure \ref{F:chords_algo})).

\begin{figure}\fbox{\includegraphics[width = 0.6\textwidth]{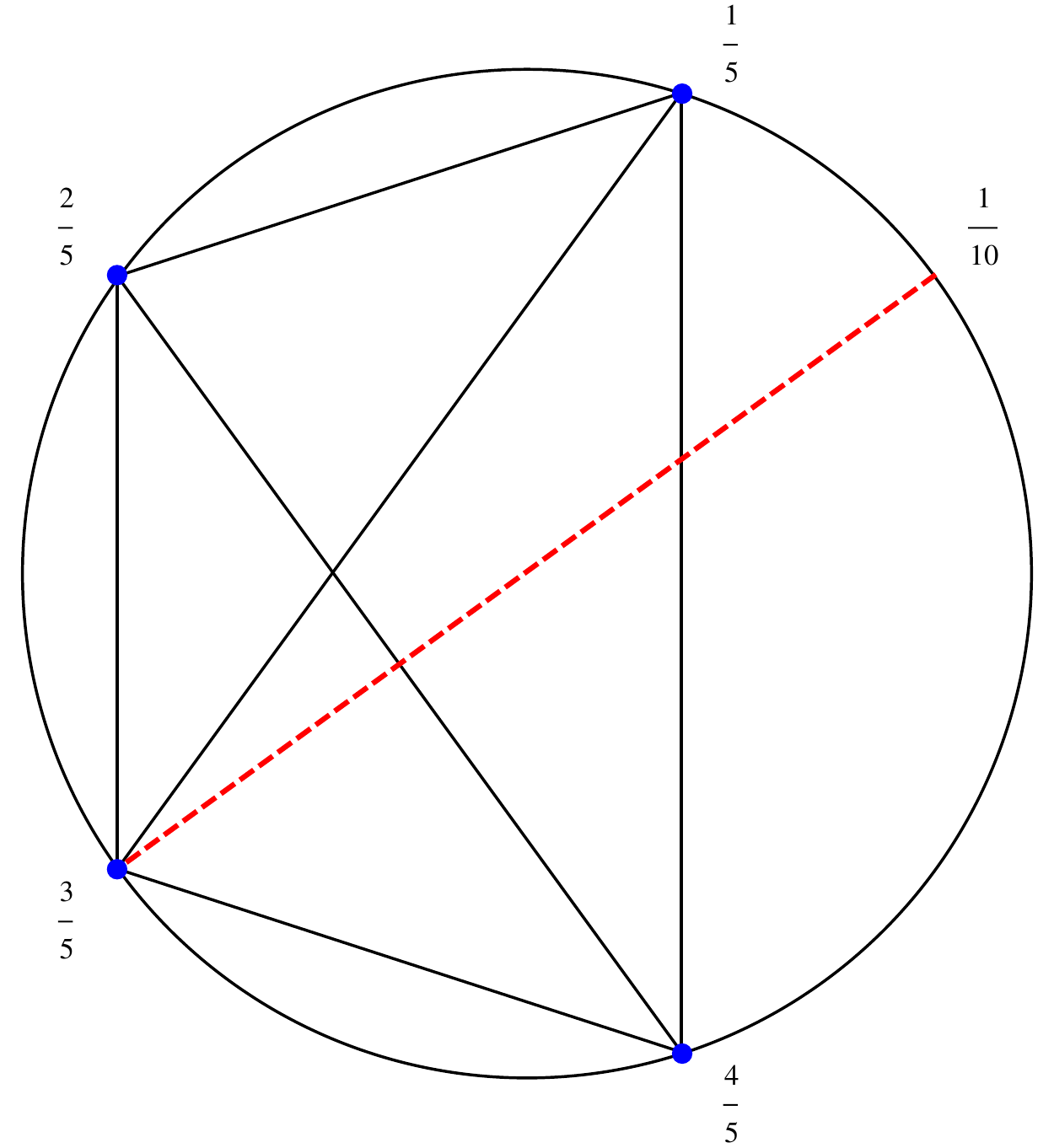}}
\caption{The graph $G$ used to compute the entropy, for $\theta = 1/5$. The vertices of $G$ are the blue points, which correspond to postcritical angles, and the edges of $G$ are the black chords. The dashed red line connects $\theta/2$ to $(\theta+1)/2$ and determines the partition $P_\theta$: pairs of postcritical angles are separated if and only if the chord joining them crosses the dashed line.
For instance, the chord $\{1/5, 2/5\}$ is non-separated, and its image is the chord $\{2/5, 4/5\}$. On the other hand, 
the chord $\{1/5, 4/5\}$ is separated, hence its image is the union of $\{2/5, 1/5\}$ and $\{1/5, 3/5\}$. }
 \label{F:chords_algo}

\end{figure}

Let us denote by $T$ the Hubbard tree of $f$.
We can define a continuous map $\Phi : G \to T$ which sends the edge with vertices $x_i$, $x_j$
homeomorphically to the regulated arc $[c_i, c_j]$ in $T$ (except in the case $c_i = c_j$, where we map all the edge 
to a single point).
Finally, we can lift the dynamics $f : T \to T$ to a map $L : G \to G$ such that $\Phi \circ L = f \circ \Phi$.
In order to do so, 
note that: 
\begin{itemize}
\item
if the pair of angles $\{x_i, x_j\}$ is non-separated, then $f$ maps the arc $[c_i, c_j]$ homeomorphically onto $[c_{i+1}, c_{j+1}]$; 
hence to define $L$ we have to lift $f$ so that it maps the edge $[x_i, x_j]$ homeomorphically onto $[x_{i+1}, x_{j+1}]$;
\item
if instead $\{x_i, x_j\}$ is separated, then the critical point $c_0$ lies on the regulated arc $[c_i, c_j]$ in the Hubbard tree; thus, 
$f$ maps the arc $[c_i , c_j] = [c_i, c_0] \cup [c_0, c_j]$ onto $[c_{i+1}, c_1] \cup [c_1, c_{j+1}]$; so we define $L$ by lifting $f$ and 
so that it maps $[x_i, x_j]$ continuously onto the union $[x_{i+1}, x_1] \cup [x_1, x_{j+1}]$.
\end{itemize}

The map $L$ is a Markov map of a topological graph, hence its entropy is the logarithm of the leading eigenvalue
of its transition matrix, which is by construction the matrix $M_\theta$.
In case $\theta$ is purely periodic, one can prove that the map $\Phi$ is surjective and finite-to-one, and semiconjugates the dynamics $L$ on $G$ to the dynamics $f$ on $T$, hence 
$$h(f, T)= h(L, G) = \log \lambda.$$
More care is needed in the pre-periodic case, since $\Phi$ can collapse arcs to points (for details, see \cite{Ga}, \cite{Ju}, or 
\cite{people}).

\section{Graphs} \label{s:graphs}
 
In the following, by \emph{graph} we mean a directed graph $\Gamma$, i.e. a set $V$ of vertices (which will be finite or countable) 
and a set $E$ of edges, such that each edge $e$ has a well-defined \emph{source} $s(e) \in V$ and a \emph{target} $t(e) \in V$
 (thus, we allow edges between a vertex and itself, 
and multiple edges between two vertices). % there is at most one edge between each ordered pair of vertices).
Given a vertex $v$, the set $Out(v)$ of its \emph{outgoing edges} is the set of edges with source $v$.
The \emph{outgoing degree} of $v$ is the cardinality of $Out(v)$; a graph is \emph{locally finite} if the outgoing degree of all its vertices is finite, and has 
\emph{bounded outgoing degree} if there is a uniform upper bound $d < \infty$ on the outgoing degree of all its vertices.
Note that we do \emph{not} require that the ingoing degree is finite, and indeed in our application we will 
encounter graphs with vertices having countably many ingoing edges.

%constant $d$ such each vertex has at most $d$ outgoing edges.
We denote as $V(\Gamma)$ the set of vertices of $\Gamma$, and as $E(\Gamma)$ its set of edges.
%If $e = (i,j)$ is an edge, then the vertex $i$ is called the \emph{source} of $e$ and $j$ is the \emph{target} of $e$.
Moreover, we denote as $\#(v \to w)$ the number of edges from vertex $v$ to vertex $w$.

A \emph{path} in the graph based at a vertex $v$ is a sequence $(e_1, \dots, e_n)$ of edges such that $s(e_1) = v$, and $t(e_i) = s(e_{i+1})$ 
for $1 \leq i \leq n-1$. The \emph{length} of the path is the number $n$ of edges, and the set of vertices 
$\{s(e_1), \dots, s(e_n) \} \cup \{ t(e_n) \}$ visited by the path is called its \emph{vertex-suppoert}, or just \emph{support}
for simplicity.
Similarly, a \emph{closed path} based at $v$ is a path  $(e_1, \dots, e_n)$ such that $t(e_n) = s(e_1) = v$. 
Note that in this definition a closed path can intersect itself, i.e. two of the sources of the $e_i$ can be the same; 
moreover, closed paths with different starting vertices will be considered to be different.

On the other hand, a \emph{simple cycle} is a closed path which does not self-intersect, modulo cyclical equivalence:
that is, a simple cycle is a closed path $(e_1, \dots, e_n)$ such that $s(e_i) \neq s(e_j)$ for $i \neq j$, and two such paths are considered the same simple cycle if the edges are cyclically permuted, i.e. $(e_1, \dots, e_n)$ and $(e_{k+1}, \dots, e_n, e_1, \dots, e_k)$ 
designate the same simple cycle.
%sequence of vertices $v_0 \to v_1 \to \dots \to v_r \to v_0$, 
%, such that $v_i \neq v_j$ for $i \neq j$.
Finally, a \emph{multi-cycle} is the union of finitely many simple cycles with pairwise disjoint (vertex-)supports. The length of a multi-cycle 
is the sum of the lengths of its components.

%We call a \emph{cycle} in the graph a closed connected path, i.e. a sequence of vertices $v_0 \to v_1 \to \dots \to v_r \to v_0$
%which are connected by edges in the graph, where two sequences identify the same path if they are cyclically permuted. 
%The set of vertices $\{v_0, \dots, v_r\}$ is called the \emph{support} of the cycle, while the number $r$ is the \emph{length} 
%of the cycle. 
%Note that in this definition a cycle can intersect itself, i.e. two of the $v_i$ can be the same.

\medskip

Given a countable graph with bounded outgoing degree, we define the \emph{adjacency operator} $A : \ell^1(V) \to \ell^1(V)$ 
on the space of summable sequences indexed by the vertex set $V$. In fact, 
for each vertex $i \in V$ we can consider the 
sequence $e_i \in \ell^1(V)$ which is $1$ at position $i$ and $0$ otherwise, and define for each $j \in V$ the 
$j^{th}$ component of the vector $A e_i$ to be 
$$(A e_i)_j :=  \#(i \to j)$$
equal to the number of edges from $i$ to $j$.
Since the graph has bounded outgoing degree, the above definition can actually be extended to all $\ell^1(V)$, 
and the operator norm of $A$ induced by the $\ell^1$-norm is bounded above by the outgoing degree $d$ of the graph.
%by setting for each vector $v = (v_i)_{i \in V} \in \ell^1(V)$ and
%each $i \in V$ the $j^{th}$ component of the vector $A e_i$ to be 
%$$(A v)_j := \left( \sum_{i \in V} \#(i \to j) v_i \right) e_j$$
%(in words, the sum is taken over all edges $i \to j$ going out from the vertex $i$).
%Note that since the vertex $i$ has finitely many outgoing edges $i \to j$, the above sum is well-defined. One can also check that the operator 
%norm of $A$ induced by the $\ell^2$-norm is bounded above by the largest degree of all vertices, which is finite by hypothesis.
Note moreover that %by the local finiteness, all powers $A^n$ are defined, and indeed, 
for each pair $i, j$ of vertices and each $n$, the coefficient
$(A^n e_i)_j$ equals the number of paths of length $n$ from $i$ to $j$.

\begin{definition}
We say a countable graph $\Gamma$ has \emph{bounded cycles} if it has bounded outgoing degree and for each positive integer $n$, 
$\Gamma$ has at most finitely many simple cycles of length $n$.
\end{definition}

Note that, if $\Gamma$ has bounded cycles, then for each $n$ it 
has also a finite number of closed paths of length $n$, 
since the support of any closed path of length $n$ is contained in the union of the supports of simple cycles with length $\leq n$. 
Thus, for such graphs we shall denote $$C(\Gamma, n)$$ 
the number of closed paths of length $n$. Note that in this case the trace $\Tr(A^n)$ is also well-defined for each $n$, 
and equal to $C(\Gamma, n)$.

\begin{definition} \label{d:growth}
If $\Gamma$ is a graph with bounded cycles, we define the \emph{growth rate} $r = r(\Gamma)$ as the exponential growth rate of the number of its closed paths: 
that is, 
$$r := \limsup_{n} \sqrt[n]{C(\Gamma, n)}.$$
\end{definition}

\subsection{The spectral determinant} \label{S:spectral}

If $\Gamma$ is a finite graph with adjacency matrix $A$, then is well-defined its characteristic polynomial $Q(t) := \det(tI - A)$,
whose roots are the eigenvalues of $A$. In the following we shall work with the related polynomial 
$$P(t) := \det (I - tA)$$
which we call the \emph{spectral determinant} of $\Gamma$. We shall now extend the theory to countable graphs with bounded cycles.

It is known that the spectral determinant $P(t)$ of a finite graph $\Gamma$ is related to its multi-cycles by the following formula (see e.g. \cite{CDS}):
\begin{equation} \label{E:pt}
P(t) := \sum_{\gamma \textup{ multi-cycle}} (-1)^{C(\gamma)}\  t^{\ell(\gamma)}
\end{equation} 
where $\ell(\gamma)$ denotes the length of the multi-cycle $\gamma$, while $C(\gamma)$ is the number of connected components of $\gamma$.

If $\Gamma$ is now a (directed) graph with countably many vertices and bounded cycles, then 
the number of multi-cycles of any given length is finite, hence the formula \eqref{E:pt} above 
is still defined as a formal power series.
Note that we include also the empty cycle, which has zero components and zero length, hence $P(t)$ begins with the constant
term $1$.

Now, let $K(\Gamma, n)$ denote the number of multi-cycles of length $n$ in $\Gamma$, and let us define 
$$\sigma := \limsup_{n} \sqrt[n]{K(\Gamma, n)}$$
 its growth rate. 
%Moreover, let 
%$$r := \frac{1}{\limsup_n \sqrt[n]{\Tr(A^n)}}$$
Then the main result of this section is the following:

\begin{theorem} \label{T:rootofP}
Suppose we have $\sigma \leq 1$; then 
the formula \eqref{E:pt} defines a holomorphic function $P(z)$ in the unit disk $|z| < 1$, and moreover 
the function $P(z)$ is non-zero in the disk 
$|z| < r^{-1}$; if $r > 1$, we also have $P(r^{-1}) = 0$.
\end{theorem}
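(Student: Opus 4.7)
The strategy is to factor the problem through the identity
$$P(z) = \exp(-F(z)), \qquad \text{where } F(z) := \sum_{n \geq 1} \frac{C(\Gamma, n)}{n}\, z^n,$$
and then deduce both conclusions from classical facts about power series with non-negative coefficients (Pringsheim's theorem). Note that $F$ has non-negative real coefficients and, since $n^{1/n} \to 1$, its radius of convergence equals exactly $1/r$.

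The first claim, that $P$ defines a holomorphic function on $|z| < 1$, follows immediately from $\sigma \leq 1$: the $n$-th coefficient of $P$ has absolute value at most $K(\Gamma, n)$, so the radius of convergence of $P$ is at least $1/\sigma \geq 1$. For the identity $P = \exp(-F)$, I would proceed by truncation. For each $N \geq 1$, let $\Gamma_N$ denote the finite subgraph of $\Gamma$ formed by the union of vertex-supports of all simple cycles of length $\leq N$; this is finite by the bounded cycles hypothesis. Every multi-cycle of length $\leq N$ in $\Gamma$ lies in $\Gamma_N$, and every closed path of length $\leq N$ decomposes into simple cycles of length $\leq N$ and hence is also supported in $\Gamma_N$. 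Consequently the coefficients of $z^k$ for $k \leq N$ in $P_\Gamma$ and $F_\Gamma$ coincide with those for $\Gamma_N$. For the finite graph $\Gamma_N$, one has the classical identities
$$P_{\Gamma_N}(z) = \det(I - z\, A_{\Gamma_N}) = \exp\bigl(\Tr \log(I - z\, A_{\Gamma_N})\bigr) = \exp(-F_{\Gamma_N}(z)),$$
the first being the standard expansion of the determinant as a signed sum over multi-cycles (cf.\ \cite{CDS}). Letting $N \to \infty$ establishes the identity as formal power series, and on the common domain of convergence $|z| < \min(1, 1/r)$ both sides are holomorphic with matching Taylor expansion at $0$, so they agree as holomorphic functions (and in fact the identity analytically continues $P$ to $|z| < 1/r$ if this disk is larger than the unit disk).

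Non-vanishing of $P$ on $|z| < 1/r$ is then immediate, since $\exp$ never vanishes. For the final claim, assume $r > 1$, so $1/r < 1$ and $P$ is holomorphic at $1/r$. Suppose for contradiction that $P(1/r) \neq 0$. Then $P$ is non-zero on some ball $B(1/r, \varepsilon)$, hence on the simply connected open set $V := B(0, 1/r) \cup B(1/r, \varepsilon)$, so $\log P$ admits a holomorphic branch on $V$ with $\log P(0) = 0$. This branch agrees with $-F$ on $|z| < 1/r$ and thus provides a holomorphic extension of $F$ to a neighborhood of $z = 1/r$. But $F$ has non-negative real Taylor coefficients and radius of convergence exactly $1/r$, so by Pringsheim's theorem $z = 1/r$ must be a singularity of $F$---a contradiction.

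The main technical point is upgrading the combinatorial identity $P = \exp(-F)$ from the finite case to the countable setting in a way that preserves both formal and analytic meaning; the truncation above handles this by reducing every coefficient-level computation to the finite subgraph $\Gamma_N$, where the classical determinantal identity applies.
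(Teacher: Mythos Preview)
Your proof is correct and follows essentially the same route as the paper: both establish the identity $1/P(t) = \exp\bigl(\sum_n \Tr(A^n) t^n / n\bigr)$ by truncating to finite subgraphs supporting all short cycles, then use Pringsheim's theorem to pin down the singularity at $t = r^{-1}$. The only cosmetic difference is that the paper applies Pringsheim to the series $1/P(t)$ itself (whose coefficients are non-negative since $\exp$ has non-negative Taylor coefficients), whereas you apply it to $F = -\log P$; the two versions are equivalent.
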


The proof uses in a crucial way the following combinatorial statement.
\begin{lemma} \label{L:1/Pt}
Let $\Gamma$ be a countable graph with bounded cycles, and $A$ its adjacency operator.
Then we have the equalities of formal power series
$$\frac{1}{P(t)} = \exp \left( \sum_{m = 1}^\infty \frac{ \Tr(A^m) }{m} t^m \right) = \sum_{
\stackrel{N \in \mathbb{N}}{(m_1, \dots, m_N) \in (\mathbb{N}^+)^N}
} \frac{\Tr(A^{m_1})\cdots \Tr(A^{m_N})}{m_1\cdots m_N N! } t^{m_1+\dots+m_N}$$
where $P(t)$ is the spectral determinant. 
\end{lemma}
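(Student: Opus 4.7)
The second equality is a routine formal expansion: writing $\exp(y) = \sum_{N \geq 0} y^N/N!$ with $y = \sum_{m\geq 1} \frac{\Tr(A^m)}{m} t^m$ and applying the multinomial theorem to each $y^N$ produces exactly the claimed sum indexed by $N$ and tuples $(m_1,\dots,m_N)$. So the real content is the first equality, which I would prove by verifying the equivalent identity
$$P(t) \cdot \exp\left(\sum_{m=1}^\infty \frac{\Tr(A^m)}{m} t^m\right) = 1$$
coefficient by coefficient in $t$.

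Fix $n \geq 0$. The coefficient of $t^n$ on the left is a polynomial expression in the integers $K(\Gamma, m)$ (the number of multi-cycles of length $m$) and $\Tr(A^m) = C(\Gamma, m)$ for $1 \le m \leq n$. Because $\Gamma$ has bounded cycles, the union $V_n$ of vertex-supports of all simple cycles of length at most $n$ is finite. Let $\Gamma_n$ denote the finite subgraph of $\Gamma$ induced by $V_n$, retaining all $\Gamma$-edges between vertices of $V_n$. By construction every closed path and every multi-cycle of length $\leq n$ in $\Gamma$ lies entirely in $\Gamma_n$, and conversely; hence $K(\Gamma, m) = K(\Gamma_n, m)$ and $\Tr(A^m) = \Tr(A_n^m)$ for $m \leq n$, where $A_n$ is the adjacency matrix of $\Gamma_n$. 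It follows that the coefficient of $t^n$ in the identity for $\Gamma$ equals the coefficient of $t^n$ in the analogous identity for $\Gamma_n$.

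For the finite graph $\Gamma_n$ the classical matrix identity
$$\det(I - tA_n) = \exp\bigl(\Tr \log(I - tA_n)\bigr) = \exp\left(-\sum_{m \geq 1} \frac{\Tr(A_n^m)}{m} t^m\right)$$
holds as a formal power series in $t$: the first equality is $\det(e^M) = e^{\Tr(M)}$ applied to $M := \log(I - tA_n) = -\sum_{m \geq 1} (tA_n)^m / m$, and the second uses the scalar expansion $\log(1-x) = -\sum_m x^m/m$. The finite-graph version of the multi-cycle formula \eqref{E:pt} identifies $\det(I - tA_n)$ with $P_{\Gamma_n}(t)$, so the lemma holds for $\Gamma_n$; transporting it back to $\Gamma$ via the coefficient-wise agreement above completes the proof.

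Since everything takes place at the level of formal power series there is no genuine analytic obstacle. The only real work is the combinatorial bookkeeping: setting up the truncation $\Gamma_n$ so that every closed path and every multi-cycle of length $\le n$ in $\Gamma$ is captured exactly, and invoking the two classical finite-graph ingredients (the multi-cycle expansion of the characteristic polynomial, and the identity $\det(e^M)=e^{\Tr(M)}$) on that truncation.
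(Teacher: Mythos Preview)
Your proof is correct and follows essentially the same strategy as the paper: reduce modulo $t^{n+1}$ to a finite subgraph using the bounded-cycles hypothesis, then invoke the classical finite-graph identity. The only cosmetic difference is that the paper establishes the finite case by factoring $P(t)=\prod_i(1-\lambda_i t)$ over the eigenvalues of $A$ and expanding each $-\log(1-\lambda_i t)$, whereas you appeal to $\det(e^M)=e^{\Tr M}$ with $M=\log(I-tA_n)$; these are equivalent routes to the same formal identity.
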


\begin{proof}
Note that, since $P(t)$ is a power series with constant term $1$, then $1/P(t)$ is indeed a well-defined power series.
The second equality is just obtained by expanding the exponential function in power series.
To prove the first equality, let us first suppose $\Gamma$ is a finite graph with $n$ vertices, and let $\lambda_1, 
\dots, \lambda_n$ be the eigenvalues of its adjacency matrix (counted with algebraic multiplicity). 
Then $P(t) = \prod_{i = 1}^n (1- \lambda_i t)$ hence 
$$\log(1/P(t)) = \sum_{i = 1}^n -\log(1-\lambda_i t)  = \sum_{i =1}^n \sum_{m =1}^\infty  \frac{\lambda_i^m t^m}{m}  =  \sum_{m=1}^\infty  \frac{t^m (\sum_{i=1}^n \lambda_i^m)}{m}$$
hence the claim follows since $\sum_{i=1}^n \lambda_i^m = \Tr(A^m)$.

Now, let $\Gamma$ be infinite with bounded cycles, and let $M \geq 1$.
Note that both sides of the equation depend, modulo $t^{M+1}$, only on multi-cycles of length $\leq M$, 
which by the bounded cycle condition are supported on a finite subgraph $\Gamma_M$. Thus by applying the 
previous proof to $\Gamma_M$ we obtain equality modulo $t^{M+1}$, and since this holds for any $M$ the claim is proven.
\end{proof}

\begin{proof}[Proof of Theorem \ref{T:rootofP}.]

By the root test, the radius of convergence of $\phi(t) := \sum \frac{\Tr(A^m)}{m} t^m$ is $r^{-1}$.
Thus, since the exponential function has infinite radius of convergence, the radius of convergence of $\exp(\phi(t))$ is at least 
$r^{-1}$, and since $\exp(t) = 1 + t + \psi(t)$ where $\psi(t)$ has positive coefficients, then the radius of convergence of $\exp(\phi(t))$
is exactly equal to $r^{-1}$.

Hence, by Lemma \ref{L:1/Pt}, the radius of convergence of $1/P(t)$ around $t = 0$ is also $r^{-1}$. On the other hand, since $\sigma \leq 1$, then by the root test
the power series $P(t)$ converges inside the unit disk, and defines a holomorphic function; thus, $1/P(t)$ is meromorphic for $|t| < 1$, and holomorphic
for $|t| < r^{-1}$, hence $P(t) \neq 0$ if $|t| < r^{-1}$. 
Moreover, if $r >1$, then the radius of convergence of $1/P(t)$
equals the smallest modulus of one of its poles, hence $r^{-1}$ is the smallest modulus of a zero of $P(t)$.
Finally, since $1/P(t)$ has all its Taylor coefficients real and nonnegative, then 
the smallest modulus of its poles must also be a pole, so $P(r^{-1}) = 0$.
\end{proof}

\begin{lemma} \label{L:PerronEig}
If $\Gamma$ is a finite graph, then its growth rate equals the largest real eigenvalue of its adjacency matrix.
\end{lemma}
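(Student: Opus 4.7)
The plan is to combine Theorem \ref{T:rootofP} with the Perron--Frobenius theorem applied to the nonnegative integer matrix $A$. Since $\Gamma$ is finite, the spectral determinant factors as $P(t) = \det(I - tA) = \prod_i (1 - \lambda_i t)$ over the eigenvalues $\lambda_i$ of $A$ (with algebraic multiplicity), so the zeros of $P$ are exactly the reciprocals of the nonzero $\lambda_i$; in particular, the zero of $P$ of smallest modulus is $1/\rho(A)$, where $\rho(A) := \max_i |\lambda_i|$ is the spectral radius of $A$. Perron--Frobenius, applied to the nonnegative matrix $A$, guarantees that $\rho(A)$ is itself a real nonnegative eigenvalue, and hence coincides with the largest real eigenvalue of $A$. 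Thus it suffices to prove that the growth rate $r$ equals $\rho(A)$.

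For the upper bound $r \leq \rho(A)$, I would use the identity $C(\Gamma, n) = \Tr(A^n) = \sum_i \lambda_i^n$, valid because the trace of a matrix equals the sum of its eigenvalues with multiplicity. This yields the crude estimate $C(\Gamma, n) \leq |V(\Gamma)|\, \rho(A)^n$; taking $n$-th roots and passing to the $\limsup$ absorbs the polynomial prefactor and gives $r \leq \rho(A)$.

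For the lower bound $r \geq \rho(A)$, I would invoke Theorem \ref{T:rootofP}. Because $\Gamma$ is finite it has only finitely many multi-cycles, so the multi-cycle growth rate $\sigma$ equals $0 \leq 1$ and the theorem is applicable. Its non-vanishing assertion says that $P(z) \neq 0$ for $|z| < r^{-1}$; but by the first paragraph $1/\rho(A)$ is a zero of $P$, so $r^{-1} \leq 1/\rho(A)$, i.e., $r \geq \rho(A)$. The only degenerate situation is $\rho(A) = 0$, in which case all eigenvalues vanish, $A$ is nilpotent, $C(\Gamma, n) = 0$ for all sufficiently large $n$, and trivially $r = 0 = \rho(A)$. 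Combining the two inequalities yields $r = \rho(A)$, and Perron--Frobenius identifies this common value with the largest real eigenvalue of $A$.

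I do not anticipate any serious obstacle: Theorem \ref{T:rootofP} already does the analytically substantive work, so the argument reduces to a short application of Perron--Frobenius together with an elementary spectral estimate for the trace.
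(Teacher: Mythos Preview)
Your proposal is correct and follows essentially the same route as the paper: invoke Perron--Frobenius to identify the spectral radius with the largest real eigenvalue, note that its reciprocal is the smallest-modulus zero of $P(t)=\prod_i(1-\lambda_i t)$, and then appeal to Theorem~\ref{T:rootofP}. Your version is slightly more explicit---you separate the two inequalities, obtaining $r\le\rho(A)$ from the elementary trace bound rather than from the zero assertion in Theorem~\ref{T:rootofP}, and you treat the nilpotent case $\rho(A)=0$ by hand---but the substance is the same.
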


\begin{proof}
Note that, by the Perron-Frobenius theorem, since the adjacency matrix is non-negative, it has at least one real eigenvalue whose 
modulus is at least as large as the modulus of any other eigenvalue.
Moreover, if $\lambda$ is the largest real eigenvalue, then $\lambda^{-1}$ is the smallest root of the spectral determinant $P(t)$, hence 
the claim follows from Theorem \ref{T:rootofP}.
\end{proof}

\section{Weak covers of graphs}

%We define the set of \emph{successors} of a vertex $i$ in a graph $\Gamma$ to be the set
%$$S_\Gamma(i) := \{ j \in V(\Gamma) \ : \ (i, j) \in E(\Gamma) \}$$
%of vertices $j$ for which there is an edge from $i$ to $j$.

%The set of $\emph{outgoing edges}$ from vertex $i$ is
%$$Out(i) := \{ E \in E(\Gamma) \ : \ source(E) = i \}$$

%\begin{definition}
Let $\Gamma_1, \Gamma_2$ two (locally finite) graphs. 
%\begin{definition}
A \emph{graph map} from $\Gamma_1$ to $\Gamma_2$ 
is a map
$\pi : V(\Gamma_1) \to V(\Gamma_2)$ on the vertex sets 
and a map on edges 
$\pi : E(\Gamma_1) \to E(\Gamma_2)$ which is compatible, in the sense that 
% and 
%preserves edges: 
%such that its induced map on the edges 
%maps edges of $\Gamma_1$ to edges of $\Gamma_2$: 
%that is, 
if the edge $e$ connects $v$ to $w$ in $\Gamma_1$, then the edge $\pi(e)$ connects $\pi(v)$ to $\pi(w)$ in $\Gamma_2$.
%$(i, j) \in E(\Gamma_1)$ is an edge of $\Gamma_1$, then $(\pi(i), \pi(j))$ is an edge of $\Gamma_2$.
We shall usually denote such a map as $\pi : \Gamma_1 \to \Gamma_2$. 
%\end{definition}

%\begin{definition}
A \emph{weak cover} of graphs is a graph map $\pi : \Gamma_1 \to \Gamma_2$ 
such that: 
\begin{itemize}
\item the map $\pi : V(\Gamma_1) \to V(\Gamma_2)$ between the vertex sets is surjective; 
\item the induced map  
$\pi : Out(v) \to Out(\pi(v))$ between outgoing edges is a bijection for each $v \in V(\Gamma_1)$.
%$\pi : S_{\Gamma_1}(i) \to S_{\Gamma_2}(\pi(i))$ between successor sets is a bijection.
%\item for each $i \in V(\Gamma_1)$, the degree of $i$ equals the degree of $\pi(i)$.
\end{itemize}
%\end{definition}

Note that the map between outgoing edges is defined because $\pi$ is a graph map, and $\pi$ also induces a map from paths in $\Gamma_1$ 
to paths in $\Gamma_2$. As a consequence of the definition of weak cover, 
you have the following \emph{unique path lifting} property: 

\begin{lemma}
Let $\pi : \Gamma_1 \to \Gamma_2$ a weak cover of graphs. Then given $v \in V(\Gamma_1)$ and $w = \pi(v) \in V(\Gamma_2)$, 
for every path $\gamma$ in $\Gamma_2$ based at $w$ there is a unique path $\widetilde{\gamma}$ in $\Gamma_1$ based at $v$
such that $\pi(\widetilde{\gamma}) = \gamma$.
\end{lemma}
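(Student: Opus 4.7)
The plan is to prove this by induction on the length of the path $\gamma$, building $\widetilde{\gamma}$ edge by edge and invoking the bijection on outgoing edges at each step to secure both existence and uniqueness.

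First I would set up the induction. Write $\gamma = (e_1, \dots, e_n)$ with $s(e_1) = w = \pi(v)$, and let $v_k$ denote the endpoint of the lift after $k$ steps, with $v_0 := v$. The inductive hypothesis I would maintain is that a unique lift $(\widetilde{e}_1, \dots, \widetilde{e}_k)$ based at $v$ exists with $\pi(\widetilde{e}_i) = e_i$ for $i \leq k$, and that the resulting endpoint $v_k$ satisfies $\pi(v_k) = t(e_k) = s(e_{k+1})$. The base case $k = 0$ is immediate.

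For the inductive step, since $\pi(v_k) = s(e_{k+1})$, the edge $e_{k+1}$ lies in $Out(\pi(v_k))$. By the definition of weak cover, the restriction $\pi : Out(v_k) \to Out(\pi(v_k))$ is a bijection, so there is a \emph{unique} edge $\widetilde{e}_{k+1} \in Out(v_k)$ with $\pi(\widetilde{e}_{k+1}) = e_{k+1}$. Setting $v_{k+1} := t(\widetilde{e}_{k+1})$, the graph map compatibility gives
\[
\pi(v_{k+1}) = \pi(t(\widetilde{e}_{k+1})) = t(\pi(\widetilde{e}_{k+1})) = t(e_{k+1}),
\]
which preserves the inductive hypothesis. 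Uniqueness of the full lift follows because at each stage the next edge is forced, while existence follows because at each stage the required edge is provided by the bijection.

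There is no real obstacle here: the statement is essentially the discrete analog of path lifting for topological covering maps, and the two axioms in the definition of weak cover are tailored precisely to make the inductive step work. The only thing to be mindful of is to invoke both halves of the bijection property correctly (surjectivity for existence, injectivity for uniqueness), and to use graph-map compatibility to track the endpoints.
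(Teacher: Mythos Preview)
Your proof is correct and follows essentially the same approach as the paper: lift $\gamma$ edge by edge, using the bijection $Out(v_k)\to Out(\pi(v_k))$ for existence and uniqueness of each successive lifted edge, and graph-map compatibility to track endpoints. The paper merely sketches this inductive step in two sentences, while you have written it out more formally.
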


\begin{proof}
Let $v \in V(\Gamma_1)$ and $w = \pi(v) \in V(\Gamma_2)$, and let $\gamma = (e_1, \dots, e_n)$ be a path in $\Gamma_2$ based at $w$.
Since the map $Out(v) \to Out(w)$ is a bijection, there exists exactly one edge $f_1 \in Out(v)$ such that $\pi(f_1) = e_1$. By compatibility, 
the target $u$ of $f_1$ projects to the target of $e_1$, hence we can apply the same reasoning and lift the second edge $e_2$ uniquely starting from $u$,  and so on.
\end{proof}

An immediate consequence of the property is the following

\begin{lemma}  \label{L:easyineq}
Each graph map $\pi : \Gamma_1 \to \Gamma_2$ induces a map 
$$\Pi_{n, v} : \left\{ 
\begin{array}{ll} \textup{closed paths in }\Gamma_1\textup{ of}\\
\textup{length }n\textup{ based at }v 
\end{array} \right\} 
\to 
\left\{ 
\begin{array}{ll} \textup{closed paths in }\Gamma_2\textup{ of}\\
\textup{length }n\textup{ based at }\pi(v) 
\end{array} \right\} 
$$
for each $v \in \Gamma_1$ and each $n \geq 1$. Moreveor, if $\pi$ is a weak cover, then $\Pi_{n, v}$ is injective.
\end{lemma}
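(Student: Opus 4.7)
The plan is to define $\Pi_{n,v}$ in the obvious way and then reduce injectivity to the unique path lifting property already established.

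First I would define the map: given a closed path $\gamma = (e_1, \dots, e_n)$ in $\Gamma_1$ based at $v$, set $\Pi_{n,v}(\gamma) := (\pi(e_1), \dots, \pi(e_n))$. To verify this is a well-defined closed path of length $n$ based at $\pi(v)$ in $\Gamma_2$, I would simply unwind the compatibility condition in the definition of a graph map: since $s(e_1) = v$ and $t(e_n) = v$, we get $s(\pi(e_1)) = \pi(v)$ and $t(\pi(e_n)) = \pi(v)$; and since $t(e_i) = s(e_{i+1})$ for each $i$, compatibility gives $t(\pi(e_i)) = \pi(t(e_i)) = \pi(s(e_{i+1})) = s(\pi(e_{i+1}))$. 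So consecutive images concatenate, the starting and ending vertex is $\pi(v)$, and the length is preserved. This part holds for arbitrary graph maps, with no weak cover hypothesis needed.

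For injectivity under the weak cover hypothesis, suppose $\gamma = (e_1, \dots, e_n)$ and $\gamma' = (e_1', \dots, e_n')$ are two closed paths in $\Gamma_1$ based at $v$ with $\Pi_{n,v}(\gamma) = \Pi_{n,v}(\gamma')$. Then $\pi(\gamma) = \pi(\gamma')$ is a single path $\delta$ in $\Gamma_2$ based at $\pi(v)$, and both $\gamma$ and $\gamma'$ are lifts of $\delta$ to $\Gamma_1$ starting at $v$. By the unique path lifting property (the preceding lemma), there is only one such lift, so $\gamma = \gamma'$.

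There is no genuine obstacle here; the statement is essentially a packaging of unique path lifting. The only thing to be slightly careful about is that closed paths in the paper's convention carry a distinguished basepoint (not a cyclic equivalence class), so the lift from a fixed $v$ is unambiguous and the injectivity statement is literally correct as stated. If one tried to replace closed paths by simple cycles (which are taken up to cyclic permutation), injectivity could fail, but for closed paths based at $v$ the argument is immediate.
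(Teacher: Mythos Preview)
Your proof is correct and matches the paper's approach exactly: the paper states this lemma as an immediate consequence of the unique path lifting property, and that is precisely what you have written out in detail. Your remark about the basepoint convention is apt and makes explicit what the paper leaves implicit.
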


\subsection{Quotient graphs}

A general way to construct weak covers of graphs is the following. 
Suppose we have an equivalence relation $\sim$ on the vertex set $V$ of a locally finite graph, and denote $\overline{V}$ 
the set of equivalence classes of vertices. Such an equivalence relation is called \emph{edge-compatible} if whenever $v_1 \sim v_2$, 
for any vertex $w$
the total number of edges from $v_1$ to the members of the equivalence class of $w$ equals the total number of edges from $v_2$ to 
the members of the equivalence class of $w$.
When we have such an equivalence relation, we can define a quotient graph $\overline{\Gamma}$ with vertex set $\overline{V}$. Namely, we denote for each $v, w \in V$ the respective 
equivalence classes as $[v]$ and $[w]$, and define the number of edges from $[v]$ to $[w]$ in the quotient graph to be 
$$\#([v] \to [w]) := \sum_{u \in [w]} \#(v \to u).$$
By definition of edge-compatibility, the above sum does not depend on the representative $v$ chosen inside the class $[v]$. Moreover, 
it is easy to see that the quotient map 
$$\pi : \Gamma \to \overline{\Gamma}$$
is a weak cover of graphs.

Let us now relate the growth of a graph to the growth of its weak covers.

\begin{lemma} \label{L:growthcover}
Let $\pi : \Gamma_1 \to \Gamma_2$ a weak cover of graphs with bounded cycles, and $S$ a finite set of vertices of $\Gamma_1$.
\begin{enumerate}
\item Suppose that every closed path in $\Gamma_1$ 
passes through $S$. Then for each $n$ we have the estimate
$$C(\Gamma_1, n) \leq n \cdot \# S \cdot C(\Gamma_2, n)$$
which implies 
$$r(\Gamma_1) \leq r(\Gamma_2).$$
\item
Suppose that $\mathcal{G}$ is a set of closed paths in $\Gamma_2$ such that each $\gamma \in \mathcal{G}$
crosses at least one vertex $w$ with the property that: the set $S_w := \pi^{-1}(w) \cap S$ is non-empty, and
%there is some vertex every element of $\mathcal{G}$ passes through $\pi(S)$, and 
any lift of $\gamma$
from an element of $S_w$ ends in $S_w$. Then there exists $L \geq 0$, which depends on $S$, such that for each $n$ we have 
$$\#\{ \gamma \in \mathcal{G} \ : \ \textup{length}(\gamma) = n \} \leq n \sum_{k = 0}^L C(\Gamma_1, n+k).$$%\#\{ \gamma \in \mathcal{G} \ : \ n \leq \textup{length}(\gamma) \leq n + k \}$$
\end{enumerate}
\end{lemma}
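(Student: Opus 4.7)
I would prove both parts via a cyclic-shifting argument combined with the unique path lifting property for weak covers.

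For part (1), I would first observe that every closed path $\gamma$ in $\Gamma_1$ of length $n$ has at least one cyclic shift based at a vertex of $S$ (since $\gamma$ visits $S$ by hypothesis), while it has exactly $n$ cyclic shifts in total; hence $C(\Gamma_1, n)$ is bounded by $n$ times the number of closed paths of length $n$ whose base vertex lies in $S$. For each $v \in S$, Lemma \ref{L:easyineq} provides an injection from such closed paths based at $v$ into the closed paths in $\Gamma_2$ based at $\pi(v)$; summing over $v \in S$ contributes at most $\#S \cdot C(\Gamma_2, n)$. Combining these yields $C(\Gamma_1, n) \le n \cdot \#S \cdot C(\Gamma_2, n)$, and taking $n$-th roots and $\limsup$ gives $r(\Gamma_1) \le r(\Gamma_2)$.

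For part (2), the plan is to attach a short closing path in $\Gamma_1$ to the lift of each $\gamma$. First I would define $L$ as the maximum, over pairs $(u_1, u_2) \in S \times S$ for which a directed path from $u_1$ to $u_2$ exists in $\Gamma_1$, of the length of a shortest such path; finiteness of $S$ guarantees $L < \infty$. Given $\gamma \in \mathcal{G}$ of length $n$, I would pick canonically a vertex $w$ on $\gamma$ satisfying the hypothesis, cyclically shift $\gamma$ to start at $w$, and then consider the self-map $\phi_\gamma : S_w \to S_w$ sending $v$ to the endpoint of the unique lift of $\gamma$ from $v$ (well-defined by the hypothesis). Because $S_w$ is finite, $\phi_\gamma$ has a periodic point $v_*$; then a directed path in $\Gamma_1$ from $\phi_\gamma(v_*)$ back to $v_*$ exists (obtained by concatenating further lifts of $\gamma$ until the orbit returns to $v_*$), and by the definition of $L$ there is such a path of length at most $L$. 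Appending it to the lift of $\gamma$ from $v_*$ produces a closed path $\tilde\gamma$ in $\Gamma_1$ based at $v_*$ of length $n + \ell$ for some $0 \le \ell \le L$.

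Finally I would bound the multiplicity of $\gamma \mapsto \tilde\gamma$: given $\tilde\gamma$ and its base vertex $v_* \in S$, projecting the first $n$ of its edges via $\pi$ recovers the cyclic shift of $\gamma$ starting at $w = \pi(v_*)$, and the original $\gamma$ is determined by this cyclic shift only up to a rotation, giving at most $n$ preimages. Since $\tilde\gamma$ has length in $[n, n+L]$, this produces the bound $\#\{\gamma \in \mathcal{G} : \textup{length}(\gamma) = n\} \le n \sum_{k=0}^L C(\Gamma_1, n+k)$. The main obstacle is ensuring the closing path in $\Gamma_1$ exists and has length bounded uniformly in $n$; this is where the periodicity of $\phi_\gamma$ (for existence) and the finiteness of $S$ (for the uniform bound $L$) come together.
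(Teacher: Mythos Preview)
Your argument for part (1) is essentially identical to the paper's: cyclically shift to a base point in $S$, then use the injectivity from Lemma \ref{L:easyineq}, and count fibers.

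For part (2) your proof is correct but genuinely different from the paper's. The paper introduces the strongly connected components of $\Gamma_1$: among the s.c.c.'s meeting $S_w$ it selects a ``terminal'' one $K_0$ (one with no outgoing path to another such component), picks $v \in K_0 \cap S_w$, and observes that the endpoint $u$ of the lift of $\gamma$ from $v$ must again lie in $K_0$, so a pre-chosen path $\gamma(u,v) \in \mathcal{P}$ closes it up. You instead exploit that $\phi_\gamma : S_w \to S_w$ is a self-map of a finite set and therefore has a periodic point $v_*$; iterated lifts then witness a directed path from $\phi_\gamma(v_*)$ back to $v_*$, so a shortest such path (of length $\le L$) closes the lift. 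Both mechanisms produce a vertex in $S_w$ whose lift can be closed by a path of length bounded uniformly in terms of $S$ alone, and both finish with the same fiber count (at most $n$ cyclic rotations). Your route avoids the s.c.c.\ machinery and is a bit more elementary; the paper's route makes the existence of the closing path immediate from the definition of s.c.c., without the extra step of passing through periodicity to exhibit it.
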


\begin{proof}
(1) Let $\gamma$ be a closed path in $\Gamma_1$ of length $n$, based at $v$. Let now $v_0$ be the first vertex of $\gamma$ which belongs to $S$ (by hypothesis, 
there is one). If we call $\gamma_0$ the cyclical permutation of $\gamma$ based at $v_0$, the projection of $\gamma_0$ 
to $\Gamma_2$ now yields a closed path $\gamma'$ in $\Gamma_2$ which is based at $v' = \pi(v_0) \in \pi(S)$. Now
note that for each such pair $(\gamma', v')$, there are at most $\#S$ possible choices for $v_0 \in \pi^{-1}(v')\in S$; then, given $v_0$ there is a unique lift of $\gamma'$
from $v_0$, and $n$ possible choices for the vertex $v$ on that lift, giving the estimate.

(2)
Recall first that in every directed graph we can define an equivalence relation by saying $v \sim w$ 
if there is a path from $v$ to $w$ and a path from $w$ to $v$. The equivalence classes are known 
as \emph{strongly connected components}, or $s.c.c.$ for short. Moreover, we can form a graph $\Gamma_{scc}$ whose vertices are 
the strongly connected components and there is an edge from the s.c.c. $K_1$ to $K_2$ if in the original graph 
there is a path from an element of $K_1$ to an element of $K_2$. Note that by construction 
this graph has no cycles.

Moreover, let us consider a pair $(v_1, v_2)$ of distinct elements of $S$; 
then either there is no path from $v_1$ to $v_2$, or we can pick some path from $v_1$ to $v_2$, which will be denoted 
$\gamma(v_1, v_2)$. Then the set $\mathcal{P} := \{ \gamma(v_1, v_2) \ :  \ v_1, v_2 \in S \}$ of such paths 
is finite, and let $L$ be the maximum length of an element of $\mathcal{P}$.

Let now $\gamma$ be a closed path in $\Gamma_2$ based at $w_0$. By hypothesis, $\gamma$ passes through some $w$
such that $S_w = \pi^{-1}(w) \cap S \neq \emptyset$, and each lift of $\gamma$ from $S_w$ ends in $S_w$. 
Let us now consider the set $\mathcal{K}_w$ of all s.c.c. of $\Gamma_1$ which intersect $S_w$: since the graph $\Gamma_{scc}$ constructed above has 
no cycles, there is a component $K_0 \in \mathcal{K}_w$ which has the property that there is no path from $K_0$ to 
some other component of $S_w$. Thus, if we pick $v \in K_0$ and lift $\gamma$ from $v$ to a path $\widetilde{\gamma}$ in $\Gamma_1$, the endpoint $u$ 
of $\widetilde{\gamma}$ must lie inside $S_w$ by hypothesis. Thus, by the property of $K_0$, the vertex $u$ lies in $K_0$, hence there is a path $\gamma(u, v)$ from $u$ to $v$ 
in the previously chosen set $\mathcal{P}$, hence if we take $\gamma' = \widetilde{\gamma} \cup \gamma(u, v)$, this is a closed path in $\Gamma_1$ of length between $n$ and $n+L$.
Thus we have a map
$$\left\{  \begin{array}{ll} 
\textup{closed paths of }\mathcal{G}\\
\textup{of length }n 
\end{array} \right\} \to 
\left\{ \begin{array}{ll}
 \textup{closed paths in }\Gamma_1 \\
 \textup{of length }\in \{n, \dots, n + L\} 
% \textup{based at some point in }\pi^{-1}(S) 
 \end{array} \right\}$$
given by $(\gamma, w_0) \to (\gamma', v)$. 
Now, given $v$ we can recover $w = \pi(v)$, and given $\gamma'$ we can recover $\gamma$, since $\gamma$ is the path given by the first $n$ edges of $\pi(\gamma')$ starting at $w$. Finally, we have at most $n$ choices for the 
starting point $w_0$ on $\gamma$, hence the fibers of the above map have cardinality at most $n$, proving the claim.
\end{proof}

%\begin{proof}
%Note that if the length of $\gamma$ is one the claim follows because. By induction on the length $\ell$ of $\gamma$, the case $\ell = 1$ 
%\end{proof}
%\tableofcontents

%\begin{corollary}
%Let $\pi : \Gamma_1 \to \Gamma_2$ an weak cover of graphs. Then for each $i, j$ vertices of $\Gamma_1$, we have the inequality 
%$$a(\Gamma_1)^n_{i, j} \leq a(\Gamma_2)^n_{\pi(i), \pi(j)}$$.
%\end{corollary}

An immediate corollary of the Lemma is the following, when $\Gamma_1$ and $\Gamma_2$ are both finite.

\begin{lemma} \label{L:sameentro}
Let $\Gamma_1, \Gamma_2$ be finite graphs, and $\pi : \Gamma_1 \to \Gamma_2$ a weak cover. 
Then the growth rate of $\Gamma_1$ equals the growth rate of $\Gamma_2$.
\end{lemma}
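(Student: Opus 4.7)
The plan is to deduce Lemma \ref{L:sameentro} as a direct two-sided application of Lemma \ref{L:growthcover}, with the finiteness of $\Gamma_1$ playing the role of making every natural hypothesis vacuous. Specifically, I would take $S := V(\Gamma_1)$ (which is finite by assumption) in both parts of Lemma \ref{L:growthcover}, and show that the hypotheses of parts (1) and (2) are then automatically satisfied.

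For the inequality $r(\Gamma_1) \leq r(\Gamma_2)$, I would invoke part (1) of Lemma \ref{L:growthcover} with $S = V(\Gamma_1)$: trivially every closed path in $\Gamma_1$ passes through $S$, so the conclusion of part (1) immediately yields the desired bound.

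For the reverse inequality $r(\Gamma_2) \leq r(\Gamma_1)$, I would apply part (2) of Lemma \ref{L:growthcover} to the set $\mathcal{G}$ of \emph{all} closed paths in $\Gamma_2$, again with $S = V(\Gamma_1)$. The hypothesis to verify is that each $\gamma \in \mathcal{G}$ crosses some vertex $w$ such that $S_w = \pi^{-1}(w) \cap S = \pi^{-1}(w)$ is nonempty and every lift of $\gamma$ from $S_w$ ends in $S_w$. Given any closed path $\gamma$ based at $w$ in $\Gamma_2$, I would take $w$ itself as the distinguished vertex: $S_w = \pi^{-1}(w)$ is nonempty by surjectivity of $\pi$ on vertices, and by the unique path lifting property of weak covers (applied as in Lemma \ref{L:easyineq}), any lift of $\gamma$ starting at some $v \in \pi^{-1}(w)$ ends at a vertex projecting to $w$, hence in $\pi^{-1}(w) = S_w$. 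Therefore part (2) applies and gives $C(\Gamma_2, n) \leq n \sum_{k=0}^L C(\Gamma_1, n+k)$ for some $L$ independent of $n$, from which taking $n$-th roots and a limsup yields $r(\Gamma_2) \leq r(\Gamma_1)$.

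Combining both inequalities gives $r(\Gamma_1) = r(\Gamma_2)$. There is no real obstacle here: the substance of the lemma has already been carried by Lemma \ref{L:growthcover}, and the only thing to check is that the technical conditions (existence of a finite hitting set for closed paths, and the lifting-stays-in-fiber condition) are automatic when $\Gamma_1$ is finite and one sets $S$ to be the entire vertex set. The only minor point to be careful about is confirming that the constant $L$ produced by part (2) depends only on $S$ and the weak cover structure, not on $n$, so that the exponential growth rate of the right-hand side is controlled by $r(\Gamma_1)$.
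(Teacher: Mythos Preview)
Your proposal is correct and follows exactly the same approach as the paper: the paper's proof is the single sentence ``Apply the lemma with $S = V(\Gamma_1)$, and $\mathcal{G}$ equal to the set of all closed paths in $\Gamma_2$,'' and your write-up simply spells out the verification of the hypotheses of Lemma~\ref{L:growthcover} in that case.
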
 

\begin{proof}
Apply the lemma with $S = V(\Gamma_1)$, and $\mathcal{G}$ equal to the set of all closed paths in $\Gamma_2$.
\end{proof}

\section{Wedges}

Let us consider the set $\Sigma := \{ (i, j)\in \mathbb{N}^2 \ : \ 1 \leq i < j \}$ of pairs of disjoint positive integers. The set $\Sigma$ will be sometimes called 
the \emph{wedge} and, given an element $v = (i, j) \in \Sigma$, 
the coordinate $i$ will be called the \emph{height} of $v$, while the coordinate $j$ will be called the \emph{width} of $v$. The terminology 
becomes more clear by looking at Figure \ref{F:wedge}.

\begin{figure}[h!] 
$$\xymatrix{
& & & & \cdots \\
& & & (4,5) & \cdots \\
& & (3,4)  & (3,5) & \cdots \\
& (2,3) & (2,4) & (2,5) & \cdots \\
(1,2) & (1,3) & (1,4) & (1,5) & \cdots \\
}$$
\caption{The wedge $\Sigma$.}
\label{F:wedge}
\end{figure}
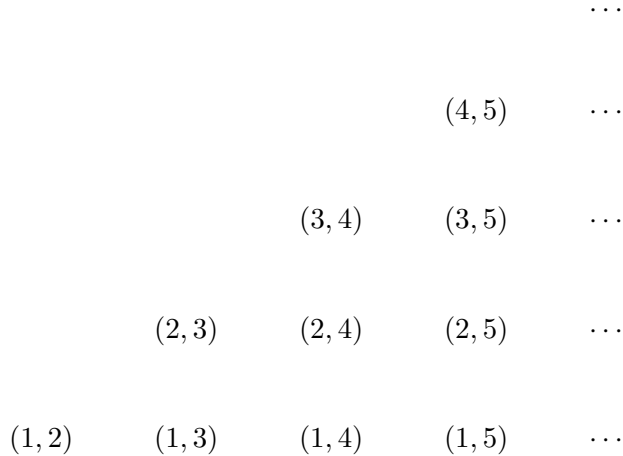

\begin{definition}
We call a \emph{labeled wedge} an assignment $\Phi : \Sigma \to \{N, S\}$ of a label $N$ (which stands for \emph{non-separated}) or $S$
(which stands for \emph{separated}) to each element of $\Sigma$. 
\end{definition}

Now, to each labeled wedge $\mathcal{W}$ we assign a graph $\Gamma_\mathcal{W}$ in the following way.
The vertex set of $\Gamma_\mathcal{W}$ is the wedge $\Sigma$, while the edges of $\Gamma_\mathcal{W}$ are 
labeled by the set $\{U, B, F\}$ %(which stand for \emph{up}, \emph{backward}, \emph{forward}), 
and determined according to the following rule.
\begin{itemize} 
\item if $(i, j) \in \Sigma$ is non-separated, then $(i,j)$ has as its (unique) successor the vertex $(i+1, j+1)$; 
we say that the edge $(i, j) \to (i+1, j+1)$ is an \emph{upward edge} and we label it with $U$;
\item if $(i, j)$ is separated, then $(i, j)$ has two successors: 
\begin{itemize}
\item 
first, we add the edge $(i, j) \to (1, j+1)$, which we call \emph{forward edge} and label it with $F$.
\item
second, we add the edge $(i, j) \to (1, i+1)$, which we call \emph{backward edge} and label it with $B$.
\end{itemize}
\end{itemize}

In order to explain the names, note that following an upward or forward edge increases the width by $1$, while following a backward edge (weakly) decreases it. 
Moreover, following an upward edge increases the height by $1$, while the targets of both backward and forward edges have height $1$.

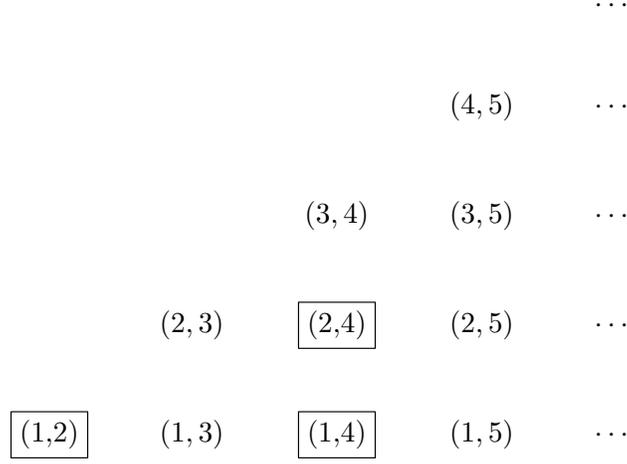
\begin{figure} 
$$\xymatrix{
& & & & \cdots \\
& & & (4,5) & \cdots \\
& & (3,4)  & (3,5) & \cdots \\
& (2,3)   & *+[F]\txt{(2,4)}   & (2,5) & \cdots \\
*+[F]\txt{(1,2)}  & (1,3)   & *+[F]\txt{(1,4)}  & (1,5) & \cdots \\
}$$
\caption{An example of a labeled wedge $\mathcal{W}$. The boxed pairs are the separated ones.}
\label{F:labwedge}
\end{figure}

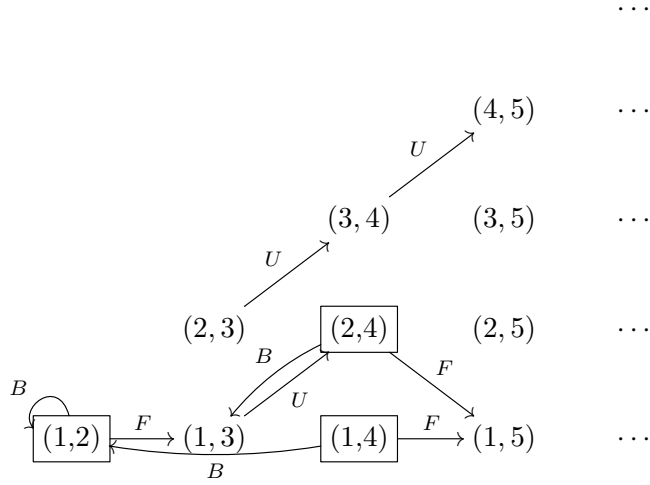
\begin{figure}
$$\xymatrix{
& & & & \cdots \\
& & & (4,5) & \cdots \\
& & (3,4) \ar[ur]^U & (3,5) & \cdots \\
& (2,3)  \ar[ur]^U & *+[F]\txt{(2,4)} \ar[dr]^F  \ar@/_/[dl]_B & (2,5) & \cdots \\
*+[F]\txt{(1,2)} \ar@(u, l)_B  \ar[r]^F & (1,3) \ar[ur]_U  & *+[F]\txt{(1,4)} \ar[r]^F \ar@/^/[ll]^B & (1,5) & \cdots \\
}$$
\caption{The graph $\Gamma_\mathcal{W}$ associated to the labeled wedge $\mathcal{W}$ of Figure \ref{F:labwedge}.}
\end{figure}

\begin{proposition} \label{P:properties}
Let $\Gamma_\mathcal{W}$ the graph associated to the labeled wedge $\mathcal{W}$. Then the following are true:
\begin{enumerate}
\item each vertex of a closed path of length $n$ has height at most $n$; 
\item the support of each closed path of length $n$ intersects the set $\{ (1, k) \ : \ 2 \leq k \leq n+1 \}$; 
\item each vertex of a closed path of length $n$ has width at most $2n$;
\item for each $k \geq 1$, there exists at most one separated vertex in the $k^{th}$ diagonal 
$$D_{k} := \{ (i, j) \in \Sigma \ : \ j-i = k \}$$
which is contained in the support of at least one closed path; 
\item there are at most $(2n)^{\sqrt{2n}}$ multi-cycles of length $n$.
\end{enumerate}
\end{proposition}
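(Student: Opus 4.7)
My approach is to decompose any closed path $\gamma$ in $\Gamma_\mathcal{W}$ into \emph{segments}: maximal runs of consecutive $U$-edges, each terminated by a single reset edge ($F$ or $B$). Each segment lies on a single diagonal, starts at a height-$1$ vertex $(1,w_i)$, climbs to a ``top'' at height $h_i \geq 1$, and then resets either to $(1, w_i + h_i)$ (if $F$) or to $(1, h_i + 1)$ (if $B$). The heights $h_1,\dots,h_m$ of the segments sum to the length $n$ of $\gamma$.

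With this setup, (1)--(3) are bookkeeping. Claim (1) is immediate since every vertex on $\gamma$ lies at height at most $\max_i h_i \leq \sum_i h_i = n$. For (2), the cyclic closure $w_{m+1} = w_1$ prevents all resets from being $F$ (which would make the $w_i$ strictly increasing), so at least one reset, say at segment $k^*$, is $B$; then $w_{k^* + 1} = h_{k^*} + 1 \in \{2,\ldots,n+1\}$, giving a height-$1$ vertex on $\gamma$ of the required width. For (3), let $k^{**}$ be the maximum-width segment. If there is some $B$-reset at $k^* \neq k^{**}$, then cyclically traversing from $k^*$ to $k^{**}$ only through $F$-resets gives $\max\text{-width}(\gamma) \leq \sum_{k^* \leq i \leq k^{**}} h_i \leq n$; otherwise $k^{**}$ is itself the unique $B$-reset segment and going around the cycle forces $w_{k^{**}} = n+1$, so $\max\text{-width}(\gamma) \leq n + h_{k^{**}} \leq 2n$. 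Claim (4) follows by backward induction along the diagonal: if $(i_2, i_2+k)$ with $i_2 \geq 2$ is separated and appears on a closed path, then since $F$- and $B$-edges only target height-$1$ vertices the only possible incoming edge is a $U$-edge from $(i_2 - 1, i_2 + k - 1)$, forcing this predecessor to be non-separated; iterating down to $(i_1, i_1 + k)$ contradicts the assumption that this vertex is separated.

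The main work is claim (5). To each multi-cycle $\gamma$ I would associate the set $B_\gamma$ of diagonals at which an edge of type $B$ is used in $\gamma$. By (4) the elements of $B_\gamma$ are distinct positive integers. The permutation $\pi_\gamma$ on the set $T_\gamma$ of diagonals used by $\gamma$ (induced by $k \mapsto $ diagonal entered after reset at the top of $k$) is a bijection, so writing the sum of segment heights two ways yields
$$ n = \sum_{k \in T_\gamma} h_k = \sum_{k \in B_\gamma} \pi_\gamma(k) + \sum_{k \in T_\gamma \setminus B_\gamma} \bigl(\pi_\gamma(k) - k\bigr) = \sum_{k \in T_\gamma} \pi_\gamma(k) - \sum_{k \in T_\gamma \setminus B_\gamma} k = \sum_{k \in B_\gamma} k. $$
Thus $B_\gamma$ is a set of distinct positive integers summing to $n$, so $|B_\gamma| \leq \sqrt{2n}$ and $B_\gamma \subseteq \{1,\ldots,n\}$.

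The crux, and the part I expect to be the main obstacle, is to show that the assignment $\gamma \mapsto B_\gamma$ is injective. The idea is that once $B = B_\gamma$ is given, the $F$/$B$ label at every top of $\gamma$ is forced ($B$ at tops whose diagonal lies in $B$, and $F$ at all other tops of $\gamma$), and so the dynamics at each top deterministically prescribes the next diagonal visited. Starting from any $b \in B$ one can then trace a uniquely defined chain of $F$-tops until re-entering $B$, and the union of these chains is precisely $\gamma$. Granted this injectivity, the count of multi-cycles is at most the number of subsets of $\{1,\ldots,n\}$ of size $\leq \sqrt{2n}$, bounded by
$$ \sum_{s=0}^{\lfloor\sqrt{2n}\rfloor} \binom{n}{s} \leq \bigl(\sqrt{2n}+1\bigr)\, n^{\sqrt{2n}} \leq 2^{\sqrt{2n}}\, n^{\sqrt{2n}} = (2n)^{\sqrt{2n}}. $$
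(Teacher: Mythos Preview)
Your proof is correct. Parts (1)--(4) are essentially the paper's arguments, repackaged through your explicit segment decomposition; for (3) the paper argues more directly by combining (2) with the observation that width increases by at most $1$ per edge to get $(n+1)+(n-1)=2n$, but your case split works as well (implicitly you take $k^*$ to be the last $B$-reset cyclically before $k^{**}$).

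The interesting difference is in (5). The paper encodes a multi-cycle by the set $\{v_1,\dots,v_r\}$ of \emph{sources} of its $B$-edges, observes that their heights $h_i$ are distinct positive integers with $\sum h_i\le n$ (since each $B$-edge is preceded by $h_i-1$ upward edges and these runs are disjoint), hence $r\le\sqrt{2n}$, and then notes that each $v_i$ lies on one of at most $2n$ diagonals. You instead encode by the set $B_\gamma$ of \emph{diagonals} carrying a $B$-edge and, using that $\pi_\gamma$ permutes $T_\gamma$, derive the exact identity $\sum_{k\in B_\gamma}k=n$. This is a sharper invariant---it pins $B_\gamma$ inside $\{1,\dots,n\}$ rather than $\{1,\dots,2n\}$ and gives an equality rather than an inequality---though it yields the same final bound $(2n)^{\sqrt{2n}}$. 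The injectivity step is the same mechanism in both proofs: by (4) the separated ``top'' on each diagonal is uniquely determined by $\mathcal{W}$, so once one knows at which diagonals a $B$-reset is taken, the entire walk is forced.
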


Note that (1), (2), (3) are sharp, as seen by the simple cycle 
$$\xymatrix{ 
14 \ar@/^/[r]^U & 25 \ar@/^/[r]^U  & *+[F]\txt{36} \ar@/^/[ll]^B
}$$

\begin{proof}
(1) Let us first note that every closed path contains at least one backward edge, since the upward and forward edges always increase the height.
Moreover, the endpoint of a backward edge has always height $1$, and each edge increases the height by at most $1$, hence the height of a vertex 
along the closed path is at most $n$.

(2) Since the target of each backward edge is $(1, i+1)$, where $i$ is the height of the source of the edge, which is at most $n$ by the previous point, 
then the target of each backward edge along the closed path belongs to the set $\{(1, k) \ : \ 2 \leq k \leq n+1\}$.

(3) By the previous point, there is at least a vertex along the closed path with width at most $n+1$. Since every move increases the width by at most $1$, then 
the largest possible width of a vertex along the path is $(n+1) + (n-1) = 2n$.

(4) Let $v_k =  (h_k, w_k)$ be the separated vertex in $D_k$ with smallest height, if there is one. We claim that no vertex $(h_k + t, w_k + t)$ of $D_k$
with $t \geq 1$ belongs to any closed path. 
Note by looking at the rules that, if a vertex $v = (i, j)$ of height $i >1$ is the target of some edge, then it must be the target of an upward edge, 
more precisely an edge from the vertex $(i-1, j-1)$ immediately to the lower left of $v$, which then must be non-separated.  
%However, if the vertex to the lower left is separated, then there is no edge with endpoint $v$.
Thus, since $v_k$ is separated, the vertex $(h_k+1, w_k+1)$ does not belong to any closed path; the claim then follows by induction on $t$, since, by the same 
reasoning, if $(h_k+t+1, w_k+t+1)$ belongs to some closed path, then also $(h_k+t, w_k+t)$ must belong to the same path.

(5) Let $e_1, \dots, e_r$ the backward edges along a multi-cycle $\gamma$ of length $n$, and denote $v_i$ the source of $e_i$, and 
$h_i$ the height of $v_i$. Note that the set $\{v_1, \dots v_r\}$ determines $\gamma$, as you can start from $v_1$, follow the backward edge, 
and then follow upward or forward edges 
until you either close the loop or encounter another $v_i$, and then continue this way until you walk along all of $\gamma$. Moreover, 
we know that for each $v_i$ there are at most $2n$ possible choices, as each $v_i$ is separated and by (4) there is at most one for each diagonal $D_k$, and 
by (3) it must lie on some $D_k$ with $k \leq 2n$. 
We now claim that 
$$r \leq \sqrt{2n}$$
which is then sufficient to complete the proof.
Let us now prove the claim. By definition of multi-cycle, then the targets of the $e_i$ must be all distinct, and by the rule these targets are precisely $(1, h_i + 1)$ with $1 \leq i \leq r$, 
hence all $h_i$ must be distinct. Moreover, let us note that each $e_i$ must be preceded along the multi-cycle by a sequence 
$$(1, k) \to (2, k + 1) \to \dots \to (h_i, h_i + k - 1)$$
of upward edges of length $h_i - 1$, and all such sequences for distinct $i$ must be disjoint. Hence we have that all $h_i$ are distinct and their total sum 
is at most the length of the multi-cycle, i.e. $n$. Thus we have 
$$\frac{r^2}{2} \leq \frac{r(r+1)}{2}  = \sum_{i = 1}^r i \leq \sum_{i = 1}^r h_i \leq n$$
which proves the claim.
\end{proof}

\begin{theorem} \label{T:zero}
Let $\mathcal{W}$ be a labeled wedge. Then its associated graph $\Gamma$ has bounded cycles, and its spectral determinant 
$P(t)$ defines a holomorphic function in the unit disk. Moreover, the growth rate $r$ of the graph $\Gamma$ equals the inverse of 
the smallest real positive root of $P(z)$, in the following sense:
$P(z) \neq 0$ for $|z| < r^{-1}$ and, if $r > 1$, then $P(r^{-1}) = 0$.
\end{theorem}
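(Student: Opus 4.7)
The plan is to reduce the theorem directly to Theorem \ref{T:rootofP}, which already packages the conclusion we want. To invoke it I need to verify two hypotheses: that the associated graph $\Gamma = \Gamma_{\mathcal{W}}$ has bounded cycles, and that the multi-cycle growth $\sigma := \limsup_n \sqrt[n]{K(\Gamma,n)}$ is at most $1$. Both are immediate from the combinatorial estimates in Proposition \ref{P:properties}.

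First I verify bounded cycles. By construction each vertex of $\Gamma$ has outgoing degree at most $2$ (a non-separated vertex emits one upward edge; a separated one emits one forward and one backward edge), so $\Gamma$ has bounded outgoing degree. Parts (1) and (3) of Proposition \ref{P:properties} say that every simple cycle of length $n$ is supported in the finite set $\{(i,j) \in \Sigma : i \leq n,\ j \leq 2n\}$. Since this vertex set is finite and the outgoing degree is uniformly bounded, there are only finitely many simple cycles of length $n$, so $\Gamma$ has bounded cycles.

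Next I check $\sigma \leq 1$. Part (5) of Proposition \ref{P:properties} gives the bound $K(\Gamma,n) \leq (2n)^{\sqrt{2n}}$, so
\begin{equation*}
\sqrt[n]{K(\Gamma,n)} \leq (2n)^{\sqrt{2n}/n} = \exp\!\left(\sqrt{2/n}\,\log(2n)\right),
\end{equation*}
and the exponent tends to $0$ as $n \to \infty$. Hence $\sigma \leq 1$.

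Having verified the two hypotheses, Theorem \ref{T:rootofP} applies verbatim: $P(t)$ is holomorphic on the unit disk, $P(z)\neq 0$ for $|z|<r^{-1}$, and if $r>1$ then $P(r^{-1})=0$. I do not expect any real obstacle here; the entire content of the theorem has already been built into the preceding combinatorial estimates, and the only mildly delicate point is the asymptotic $(2n)^{\sqrt{2n}/n}\to 1$, which is routine.
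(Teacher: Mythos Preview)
Your proof is correct and follows essentially the same route as the paper: verify bounded outgoing degree, invoke Proposition~\ref{P:properties} to get bounded cycles and the multi-cycle bound $K(\Gamma,n)\le(2n)^{\sqrt{2n}}$, check that this forces $\sigma\le 1$, and then apply Theorem~\ref{T:rootofP}. The only cosmetic difference is that you cite parts (1) and (3) for bounded cycles whereas the paper cites (5) directly (a bound on multi-cycles of each length already implies a bound on simple cycles).
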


\begin{proof}
By construction, the outgoing degree of any vertex of $\Gamma$ is at most $2$. Moreover, by Proposition \ref{P:properties} (5) the graph has bounded cycles, and 
the growth rate of the number $K(\Gamma, n)$ of multi-cycles is $\leq 1$, since 
$$ \limsup_{n} \sqrt[n]{(2n)^{\sqrt{2n}}} = \limsup_{n} e^{\frac{\sqrt{2} \log(2n)}{\sqrt{n} } } = 1.$$
The claim then follows by Theorem \ref{T:rootofP}.
\end{proof}

We shall sometimes denote as $r(\mathcal{W})$ the growth rate of the graph associated to the labeled wedge $\mathcal{W}$.
We say that a sequence $(\mathcal{W}_n)_{n \in \mathbb{N}}$ of labeled wedges converges to $\mathcal{W}$ if for each finite set 
of vertices $S \subseteq \Sigma$ there exists $N$ such for each $n \geq N$ the labels of the elements of $S$ for $\mathcal{W}_n$ and $\mathcal{W}$ 
are the same.

\begin{lemma} \label{L:contwedge}
If a sequence of labeled wedges $(\mathcal{W}_n)_{n \in \mathbb{N}}$ converges to $\mathcal{W}$, then the growth rate of $\mathcal{W}_n$ converges to the growth rate of $\mathcal{W}$.
\end{lemma}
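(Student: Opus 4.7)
The plan is to leverage Theorem \ref{T:zero}, which realizes the growth rate $r(\mathcal{W})$ as the inverse of the smallest modulus of a zero of the spectral determinant $P_\mathcal{W}(t)$ inside the unit disk. Concretely, I would show that $P_{\mathcal{W}_n}(t) \to P_\mathcal{W}(t)$ uniformly on compact subsets of $\{|t|<1\}$, and then invoke Rouch\'e / Hurwitz to transfer convergence of holomorphic functions to convergence of their smallest zeros.

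For pointwise convergence of Taylor coefficients, I would fix $k$ and note that $[t^k] P_\mathcal{W}(t)$ is the signed count of multi-cycles of length exactly $k$; by Proposition \ref{P:properties}(1)--(3) each such multi-cycle has vertex-support inside the finite window $S_k := \{(i,j) \in \Sigma : i \leq k, \ j \leq 2k\}$. Whether any given multi-cycle appears in $\Gamma_\mathcal{W}$ depends only on the labels of $\mathcal{W}$ on $S_k$, so the definition of $\mathcal{W}_n \to \mathcal{W}$ yields an index $N_k$ beyond which $[t^k] P_{\mathcal{W}_n}(t) = [t^k] P_\mathcal{W}(t)$. To upgrade to uniform convergence on compacta, Proposition \ref{P:properties}(5) supplies the wedge-independent bound $|[t^k] P(t)| \leq (2k)^{\sqrt{2k}}$; since this grows subexponentially, the tail $\sum_{k > M}(2k)^{\sqrt{2k}} \rho^k$ tends to $0$ as $M \to \infty$ for every $\rho < 1$. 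Splitting the series at $M$ and combining head stability with this uniform tail bound yields $P_{\mathcal{W}_n} \to P_\mathcal{W}$ uniformly on every disk $\{|t| \leq \rho\}$ with $\rho<1$.

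With uniform convergence in hand I would apply Rouch\'e. In the main case $r(\mathcal{W}) > 1$, set $\rho_0 := r(\mathcal{W})^{-1} \in (0,1)$. On $\{|t| = \rho_0 - \epsilon\}$ the holomorphic function $P_\mathcal{W}$ is nonzero and bounded away from $0$, so Rouch\'e forces $P_{\mathcal{W}_n}$ to have no zero in $\{|t| \leq \rho_0 - \epsilon\}$ for $n$ large, giving $r(\mathcal{W}_n) \leq (\rho_0 - \epsilon)^{-1}$. Conversely, on a small circle $\{|t - \rho_0| = \epsilon\} \subset \{|t| < 1\}$ enclosing only the zero $\rho_0$ of $P_\mathcal{W}$, Rouch\'e produces a zero of $P_{\mathcal{W}_n}$ inside for $n$ large, hence $r(\mathcal{W}_n) \geq (\rho_0 + \epsilon)^{-1}$. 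Letting $\epsilon \to 0$ yields $r(\mathcal{W}_n) \to r(\mathcal{W})$. The boundary case $r(\mathcal{W}) = 1$ is handled by the same Rouch\'e upper bound combined with the observation that any simple cycle of $\Gamma_\mathcal{W}$ is supported in some $S_k$ and hence persists in $\Gamma_{\mathcal{W}_n}$ for $n$ large, forcing $r(\mathcal{W}_n) \geq 1$.

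The main obstacle is the uniform tail bound in the second step: without a control independent of $n$ on the tails of $P_{\mathcal{W}_n}(t)$, high-order coefficients could conceivably conspire to produce spurious zeros of $P_{\mathcal{W}_n}$ accumulating inside the disk of radius $\rho_0$. Proposition \ref{P:properties}(5) is exactly the combinatorial input that supplies a single summable envelope dominating the Taylor series of every $P_{\mathcal{W}_n}$ on any compact subset of the unit disk, and this is the pivot on which the entire Rouch\'e argument rests.
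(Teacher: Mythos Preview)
Your proposal is correct and follows essentially the same route as the paper: show coefficient-wise stabilization of the spectral determinants using the finite-support bounds of Proposition~\ref{P:properties}(1)--(3), upgrade to locally uniform convergence via the uniform coefficient bound of Proposition~\ref{P:properties}(5), and finish with Rouch\'e and Theorem~\ref{T:zero}. If anything, your treatment of the Rouch\'e step is more careful than the paper's, which simply asserts that ``the smallest real positive zero of $P_n(t)$ converges to the smallest real positive zero of $P(t)$'' without separating out the boundary case $r(\mathcal{W})=1$ that you handle explicitly.
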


\begin{proof}
Let $P_n(t)$ and $P(t)$ denote respectively the spectral determinants of $\mathcal{W}_n$ and $\mathcal{W}$. Then for each $k \geq 1$, 
the coefficient of $t^k$ in $P_n(t)$ converges to the coefficient of $t^k$ in $P(t)$, because by Proposition \ref{P:properties} (1) and (3) the support of any multi-cycle of length $k$ is contained in the finite subgraph $\{ (i, j) \ : \ 1 \leq i < j \leq 2k \}$. Thus, since the modulus of the coefficient of $t^k$ is uniformly bounded above 
by $(2k)^{\sqrt{2k}}$, then $P_n(t) \to P(t)$ uniformly on compact subsets of the unit disk. Thus, by Rouch\'e's theorem, the smallest real positive zero 
of $P_n(t)$ converges to the smallest real positive zero of $P(t)$, hence by Theorem \ref{T:zero} we have $r(\mathcal{W}_n) \to r(\mathcal{W})$.
\end{proof}

\section{Periodic wedges}

Given integers $p \geq 1$ and $q \geq 0$ we define the equivalence relation $\equiv_{p, q}$ on $\mathbb{N}^+$ by saying that 
$i \equiv_{p, q} j$ if: 
\begin{itemize}
\item either $\min\{ i, j\} \leq q$ and $i = j$; 
\item or $\min\{i, j\} \geq q+1$ and $i \equiv j \mod p$.
\end{itemize} 

Note that if $q = 0$ the equivalence relation $\equiv_{p,q}$ is simply the congruence modulo $p$.
A set of representatives for the equivalence classes of $\equiv_{p,q}$ is the set $\{1, 2, \dots, p+q\}$.
The equivalence relation induces an equivalence relation on the set $\mathbb{N}^+ \times \mathbb{N}^+$ of ordered pairs of integers by saying that
 $(i,j) \equiv_{p,q} (k, l)$ if $i \equiv_{p,q} k$ and $k \equiv_{p,q} l$.
Moreover, it also induces an equivalence relation on the set of \emph{unordered pairs} of integers by saying that the unordered pair 
$\{i, j\}$ is equivalent to $\{k, l\}$ if either $(i, j) \equiv_{p,q} (k, l)$ or $(i, j) \equiv_{p,q} (l,k)$.

\begin{definition}
A labeled wedge is \emph{periodic} of period $p$ and pre-period $q$ if the following two conditions hold:
\begin{itemize}
\item %the pair $(i, j)$ has the same label as $(i, j + p)$,  $(i$(j, i+p)$ have the same label.
%if two pairs $(i, j)$ and $(k, l)$ are such that either $i \equiv k \mod p$ and $j \equiv l \mod p$, or $i \equiv l$ and $j \equiv k \mod p$, 
%then $(i, j)$ and $(k, l)$ have the same label; 
any two pairs $(i, j)$ and $(k, l)$ such that $\{i, j\} \equiv_{p,q} \{k, l\}$ have the same label;
\item
if $i \equiv_{p, q} j$, then the pair $(i, j)$ is non-separated.
\end{itemize}
If $q=0$, the labeled wedge will be called \emph{purely periodic}.
\end{definition}

A pair $(i, j)$ with $i \equiv_{p,q} j$ will be called \emph{diagonal}; hence the second point in the definition can be rephrased as 
``every diagonal pair is non-separated".

\begin{figure}[h!] 
$$\xymatrix{
& & & & \dots\\
& & & (4,5)  \ar[ur] &  \dots \\
& & (3,4)  \ar[ur]  & (3, 5) \ar[ur]  & \dots\\
& *+[F]\txt{(2,3)} \ar[d] \ar[dr] & *+[F]\txt{(2,4)} \ar[dl] \ar[dr] & *+[F]\txt{(2, 5)} \ar[dll] \ar[dr] &  \dots \\
(1,2) \ar[ur]  & *+[F]\txt{(1,3)} \ar[r] \ar[l] & *+[F]\txt{(1, 4)} \ar[r] & *+[F]\txt{(1, 5)} \ar[r] &  \dots}$$
\caption{A periodic labeled wedge, of period $p = 1$ and pre-period $q = 2$, and its infinite graph. Note that the vertex $(1,3)$ 
has infinitely many incoming edges.}
\label{F:infinite}
\end{figure}
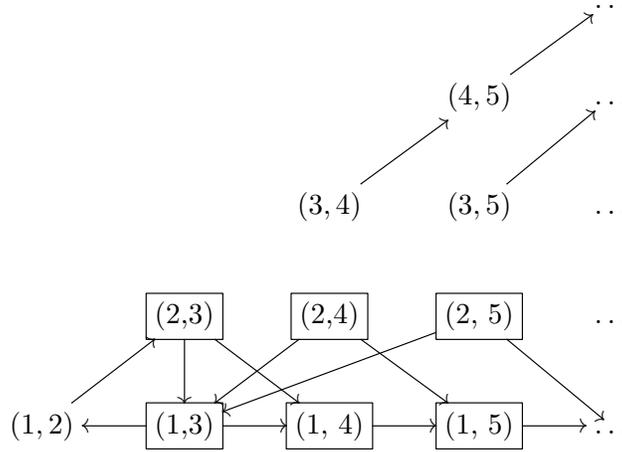

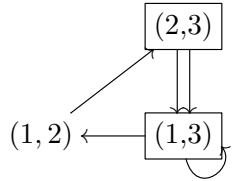
\begin{figure}[h!]
$$\xymatrix{
& *+[F]\txt{(2,3)} \ar@<0.5ex>[d] \ar@<-0.5ex>[d] \\
(1,2) \ar[ur] & *+[F]\txt{(1,3)} \ar[l]   \ar@(d,r)
}$$
\caption{The finite model of the infinite graph associated to the labeled wedge of Figure \ref{F:infinite}, of period $p = 1$ and pre-period $q = 2$.}
\label{F:finite}
\end{figure}

\subsection{The finite model} \label{S:finitemodel}

Given a periodic wedge $\mathcal{W}$ of period $p$ and pre-period $q$, with associated (infinite) graph $\Gamma$, 
we shall now construct a finite graph $\Gamma^F$ which captures
the essential features of the infinite graph $\Gamma$, in particular its growth rate.

The set of vertices of $\Gamma^F$ is the set of $\equiv_{p,q}$-equivalence classes of non-diagonal, 
unordered pairs of integers. A set of representatives of $V(\Gamma^F)$ is 
the set 
$$\Sigma_{p, q} := \{ \{i, j\} \ : \ 1 \leq i < j \leq p+q \}.$$ 
The edges of $\Gamma^F$ are induced by the edges of $\Gamma$, that is are determined by 
the following rules: 
if the unordered pair $\{i, j\}$ is non-separated, then $\{i, j\}$ has one outgoing edge, namely $\{i, j\} \to \{i+1, j+1\}$;
while if $\{i, j\}$ is separated, then $\{i, j\}$ has the two outgoing edges $\{i, j\}\ \to \{1, i+1\}$, 
and $\{i, j\} \to \{1, j+1\}$.

The main result of this section is the following.

\begin{proposition} \label{P:finiteinfinite}
Let $\mathcal{W}$ be a periodic labeled edge, with associated (infinite) graph $\Gamma$. 
Then the growth rate of $\Gamma$ equals the growth rate of its finite model $\Gamma^F$.
\end{proposition}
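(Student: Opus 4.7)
The plan is to identify $\pi : \Gamma \to \Gamma^F$ as a weak cover of graphs and to apply both parts of Lemma \ref{L:growthcover} to deduce equality of growth rates.

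First I would verify that the quotient under $\equiv_{p,q}$ is edge-compatible, hence $\pi$ is a weak cover. The two defining conditions of a periodic wedge ensure this: equivalent unordered pairs carry the same separation label, and the successor operations $(i,j) \mapsto (i+1,j+1)$, $(i,j) \mapsto (1,i+1)$, $(i,j) \mapsto (1,j+1)$ all respect $\equiv_{p,q}$, since $i \equiv_{p,q} i'$ implies $i+1 \equiv_{p,q} i'+1$ in both the range $\min \le q$ (where equality is forced) and the range $\min > q$ (by additive compatibility mod $p$).

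For the inequality $r(\Gamma) \le r(\Gamma^F)$, I would combine Lemma \ref{L:easyineq} with Proposition \ref{P:properties}(3): for each vertex $v \in V(\Gamma)$, the projection induces an injection from closed paths of length $n$ based at $v$ to closed paths of length $n$ based at $\pi(v)$ in $\Gamma^F$, and by Proposition \ref{P:properties}(3) the support of any closed path of length $n$ lies in $\{(i,j) : j \le 2n\}$, so there are at most $\binom{2n}{2}$ choices for the starting vertex. Summing gives $C(\Gamma, n) \le \binom{2n}{2}\, C(\Gamma^F, n)$, which upon taking $n$-th roots yields $r(\Gamma) \le r(\Gamma^F)$.

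For the reverse inequality $r(\Gamma^F) \le r(\Gamma)$, I would apply Lemma \ref{L:growthcover}(2) with $S = \Sigma_{p,q}$, so that each $S_{\bar w}$ is the singleton consisting of the canonical representative of $\bar w \in V(\Gamma^F)$. Taking $\mathcal{G}$ to be the set of closed paths in $\Gamma^F$ whose lift from the representative of some vertex of its support is itself closed, the hypothesis of Lemma \ref{L:growthcover}(2) holds with $L = 0$, giving $|\mathcal{G}_n| \le n \cdot C(\Gamma, n)$. The main obstacle is to show that the closed paths excluded from $\mathcal{G}$ contribute only sub-exponentially in $n$, so that $r(\mathcal{G}) = r(\Gamma^F)$: the excluded paths are exactly those whose monodromy on every representative fiber shifts the representative to a different preimage, which in the periodic setting forces a systematic translation by multiples of $p$ along the backward edges of the path. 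Using Proposition \ref{P:properties}(4), which limits each diagonal $D_k$ to at most one separated vertex appearing in any closed path, one should be able to bound the count of such excluded paths polynomially in $n$. Combining both inequalities then yields $r(\Gamma) = r(\Gamma^F)$.
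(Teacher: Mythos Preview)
Your forward inequality $r(\Gamma)\le r(\Gamma^F)$ is essentially fine, modulo one technicality you skip: the map $\Gamma\to\Gamma^F$ is \emph{not} a weak cover as written, because diagonal pairs $(i,j)$ with $i\equiv_{p,q}j$ lie in $V(\Gamma)$ but have no image in $V(\Gamma^F)$, and an edge of $\Gamma$ can land on a diagonal pair. The paper fixes this by first passing to the subgraph $\Gamma^{ND}$ of non-diagonal pairs (observing that diagonal pairs never lie on closed paths, since they are non-separated and their $U$-edge leads to another diagonal pair of greater height), and only then projecting.

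The real gap is in the reverse inequality. Your proposed application of Lemma~\ref{L:growthcover}(2) with $S=\Sigma_{p,q}$ reduces everything to the claim that closed paths in $\Gamma^F$ whose lift from the canonical representative never closes are sub-exponentially many; but the sketch you give (``systematic translation by multiples of $p$'', then invoke Proposition~\ref{P:properties}(4)) is not an argument, and Proposition~\ref{P:properties}(4) concerns $\Gamma$, not $\Gamma^F$. The paper's proof shows why a direct attack on $\Gamma\to\Gamma^F$ is awkward: the edge labels $U,F,B$ are \emph{not} well-defined on $\Gamma^F$ (a $B$-edge and an $F$-edge upstairs may project to the same edge once pairs become unordered), so the backtracking/non-backtracking dichotomy, which is the engine of the argument, is unavailable there. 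This is why the paper interposes the finite $2$-cover $\Gamma^{(2)}$ of \emph{ordered} pairs: on $\Gamma^{(2)}$ the labels survive, and one can then (i) bound non-backtracking closed paths by the constant $\#V(\Gamma^{(2)})$, since each vertex has at most one non-$B$ outgoing edge; and (ii) for backtracking closed paths, use the height bound to show they all pass through $\pi(S)$ with $S=\{(1,h):2\le h\le p+q+1\}$, and that any lift from $S$ ends in $S$. The passage from $\Gamma^{(2)}$ to $\Gamma^F$ is then handled trivially by Lemma~\ref{L:sameentro}, both graphs being finite. Your outline misses this label-preserving intermediate step, and without it the control on ``excluded'' paths is not established.
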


In order to prove the Proposition, we shall also introduce an intermediate finite graph, which we  
call the \emph{finite 2-cover} of $\Gamma^F$, and denote $\Gamma^{(2)}$.

The set of vertices of $\Gamma^{(2)}$ is the set of $\equiv_{p, q}$-equivalence classes of non-diagonal, ordered pairs of integers, 
% A set of representatives for the vertex set is  
%$$\Sigma_{p,q}^{(2)} := \{ (i,j) \ : \ 1 \leq i \leq p+q, \ i < j \leq \max\{p+q, i+p-1 \} \}$$
and the edges are induced by the edges of $\Gamma$ in the usual way.
The reason to introduce the intermediate graph $\Gamma^{(2)}$ is that $\Gamma^F$ 
does not inherit the labeling of edges from $\Gamma$, as backward and forward edges in $\Gamma$
may map to the same edge in $\Gamma^F$, while $\Gamma^{(2)}$ naturally inherits the labels. 

\begin{proof}[Proof of Proposition \ref{P:finiteinfinite}]
Let $\Gamma$ be the graph associated to a periodic labeled wedge.
\begin{enumerate}

\item
First, let us observe that no diagonal vertex is contained in the support of any closed path of $\Gamma$:
in fact, every diagonal vertex is non-separated, and its outgoing edge leads to another diagonal vertex with 
larger height, hence the path can never close up.
Thus, we can construct the subgraph $\Gamma^{ND}$ by taking as vertices all pairs which are non-diagonal, 
and as edges all the edges of $\Gamma$ which do not have either as a source or target a diagonal pair.
By what has been just said, the growth rate of $\Gamma$ and $\Gamma^{ND}$ is the same,
$$r(\Gamma^{ND}) = r(\Gamma).$$
\item 
Since the maps 
$$\Gamma^{ND} \to \Gamma^{(2)} \to \Gamma^F$$ 
are given by quotienting with respect to equivalence relations, they are both weak covers of graphs.
Thus, since both $\Gamma^{(2)}$ and $\Gamma^F$ are finite, by Lemma \ref{L:sameentro}, 
the growth rates of $\Gamma^{(2)}$ and $\Gamma^F$ are the same. 
\item 
We are now left with proving that the growth rate of $\Gamma^{ND}$ is the same as the growth rate of $\Gamma^{(2)}$.
Since the cardinality of the fiber of the projection $\Gamma^{ND} \to \Gamma^{(2)}$ is infinite, the statement is not immediate.
Note that the finite $2$-cover $\Gamma^{(2)}$ is a graph with labeled edges: indeed, 
if $(i, j) \equiv_{p,q} (k, l)$, then $(1, i+1) \equiv_{p,q} (1, k+1)$, and so on, 
thus the labeling of $\Gamma^{(2)}$ inherited from $\Gamma$ is well-defined, and the graph map 
$\Gamma^{ND} \to \Gamma^{(2)}$ preserves the labels.
\item
Let us call \emph{backtracking} a path in $\Gamma$ or $\Gamma^{(2)}$ such that at least one of its edges is
labeled by $B$ (= backward), and \emph{non-backtracking}
otherwise. Now let us note the following: 
\begin{enumerate}
\item 
Every closed path in $\Gamma$ is backtracking; in fact, following any edge which is upward or forward
increases the height, thus a path in $\Gamma$ made entirely of $U$ and $F$ edges cannot close up.

\item
Every closed path in $\Gamma$ passes through the finite set 
$$S = \{ (1, h) \ : \ 2 \leq h \leq p+q+1 \}.$$ 
In order to prove this, we first prove that any element in the support of any closed path in $\Gamma$ 
has height at most $p + q$.
In order to do so, let us fix a diagonal $D_k = \{(i, j) \ : \ j-i = k \}$.
By periodicity, either there is a separated pair $(i, j)$ in $D_k$ of height $i \leq p+q$, or all 
elements of $D_k$ are non-separated; in the latter case, no element of $D_k$ is part 
of any closed path, since any path based at an element of $D_k$ is non-backtracking. 
In the first case, let $(i_0, j_0)$ be the separated pair with smallest height in $D_k$; 
then, only the elements with height less than $i_0$ can be part of any closed path, and $i_0 \leq i \leq p+q$, 
so the first claim is proven.
As a consequence, the target of any $B$-labeled edge which belongs to some closed path
is of type $(1, h+1)$, where $h \leq p+q$ is the height of the source, hence the target belongs to $S$. 
By the same reasoning, if $\gamma$ is any path in $\Gamma$ based at a vertex of height $1$, 
then the target of any $B$-labeled edge along $\gamma$ lies in $S$.
\item 
%Every lift to $\Gamma$ of a backtracking, closed path in $\Gamma^{(2)}$ starting from $S$ ends in $S$. 
Every backtracking closed path in $\Gamma^{(2)}$ has at least one $B$-labeled edge, hence 
the target of such $B$-labeled edge lies in the set $\pi(S)$, and every lift to $\Gamma$ starting from 
an element of $S$ must end in the target of a $B$-labeled edge in $\Gamma$, hence must end in $S$.
%$S$, and every backtracking path in $\Gamma^{(2)}$
%passes through the set $\pi(S)$; 
%$$\#\{ \textup{non-backtracking paths of length }n\textup{ in }\Gamma^{(2)} \} \leq \# V()$$
\end{enumerate}
\item
Finally, let us note that for each $n$, the number of non-backtracking paths of length $n$ in $\Gamma^{(2)}$ is at most the
cardinality of $V(\Gamma^{(2)})$; indeed, from each vertex there is at most one edge labeled $U$ or $F$, thus for each 
vertex of $\Gamma^{(2)}$ there is at most one non-backtracking path of length $n$ based at it.
\item 
Let us now put together the previous statements. Indeed, by (4)(a)-(b) every closed path in $\Gamma$ passes through $S$, 
hence we can apply Lemma \ref{L:growthcover} (1) and get that 
$$r(\Gamma^{ND}) \leq r(\Gamma^{(2)}).$$
%the growth rate of $\Gamma^{ND}$ 
%is less than or equal to the growth rate of $\Gamma^{(2)}$.
To prove the other inequality, let us note that by (4)(b)-(c) we know that Lemma \ref{L:growthcover} (2) applies with $\mathcal{G}$ the set of backtracking closed paths in $\Gamma^{(2)}$.
Moreover, by point (5) above we have that the number of non-backtracking paths is bounded independently 
of $n$: thus we can write 
$$C(\Gamma^{(2)}, n) = \#\left\{\begin{array}{ll} \textup{backtr. closed}\\ \textup{paths of length }n \end{array} \right\} + 
 \#\left\{\begin{array}{ll} \textup{non-backtr. closed}\\ \textup{paths of length }n \end{array} \right\}\leq $$
$$\leq n \sum_{k = 0}^L C(\Gamma^{ND}, n+k) + \# V(\Gamma^{(2)})$$
from which follows
$$r(\Gamma^{(2)}) \leq r(\Gamma^{ND})$$ 
as required.
\end{enumerate}

\end{proof}

When dealing with purely periodic external angles, we shall also need the following lemma.

\begin{lemma} \label{L:twoperiodics}
Let $\mathcal{W}_1$ and $\mathcal{W}_2$ be two labeled wedges which are purely periodic of period $p$.
Suppose moreover that for every pair $(i, j)$ with $i, j \not\equiv 0 \mod p$ the label of $(i,j)$ 
in $\mathcal{W}_1$ equals the label in $\mathcal{W}_2$.
Then the finite models $\Gamma_1^F$ and $\Gamma_2^F$ are isomorphic graphs.
As a consequence, the growth rates of $\mathcal{W}_1$ and $\mathcal{W}_2$ are equal.
\end{lemma}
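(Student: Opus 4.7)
The plan is to compare the two finite models $\Gamma_1^F$ and $\Gamma_2^F$ directly and show that they are actually \emph{equal} as directed graphs on their common vertex set. Recall that since $q=0$, the equivalence $\equiv_{p,0}$ is just congruence modulo $p$, the vertex set of each $\Gamma_i^F$ is $\Sigma_{p,0} = \{\{a,b\} : 1 \le a < b \le p\}$, and diagonal pairs are exactly those with $a \equiv b \pmod{p}$.

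I would split the vertex set into two classes: the pairs $\{i,j\}$ with $1 \le i < j \le p-1$, and the pairs of the form $\{i,p\}$ with $1 \le i \le p-1$. For the first class, both entries satisfy $i,j \not\equiv 0 \pmod{p}$, so by hypothesis the label of $(i,j)$ agrees in $\mathcal{W}_1$ and $\mathcal{W}_2$; hence by the defining rules of the finite model the outgoing edges of $\{i,j\}$ are identical in $\Gamma_1^F$ and $\Gamma_2^F$.

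For the second class the labels may genuinely differ, and the key observation is that the outgoing edge set of $\{i,p\}$ in the finite model is \emph{independent} of whether $(i,p)$ is labeled $N$ or $S$. Indeed, if $(i,p)$ is non-separated, its unique successor in $\Gamma$ is $(i+1,p+1)$; since $p+1 \equiv 1 \pmod p$, this descends to the single edge $\{i,p\} \to \{1, i+1\}$ in $\Gamma^F$, and the target $\{1,i+1\}$ is non-diagonal because $i \not\equiv 0 \pmod p$. If instead $(i,p)$ is separated, its two successors in $\Gamma$ are $(1,i+1)$ and $(1,p+1)$; but $\{1,p+1\}$ is diagonal (as $1 \equiv p+1 \pmod p$), and such vertices are removed when passing from $\Gamma$ to $\Gamma^{ND}$, so only the edge $\{i,p\} \to \{1,i+1\}$ survives. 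In both cases $\Gamma^F$ exhibits exactly one outgoing edge from $\{i,p\}$, going to $\{1,i+1\}$.

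Combining the two cases, $\Gamma_1^F$ and $\Gamma_2^F$ have the same vertex set and the same edge set, hence are equal, and in particular have identical growth rates. The equality $r(\mathcal{W}_1) = r(\mathcal{W}_2)$ then follows from Proposition \ref{P:finiteinfinite}. The only mildly subtle step -- and what pins down the precise form of the hypothesis -- is recognising that the ``extra'' target produced by the separated rule at a boundary pair $(i,p)$ is automatically diagonal, so the $N/S$ distinction at such vertices is invisible once one passes to the finite model.
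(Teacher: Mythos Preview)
Your proof is correct and follows essentially the same approach as the paper's: split the vertices of $\Sigma_{p,0}$ according to whether one coordinate is congruent to $0$ modulo $p$, use the hypothesis directly for the first class, and observe for the second class that the putative extra target $\{1,p+1\}$ is diagonal and hence not a vertex of $\Gamma^F$, so the outgoing edge from $\{i,p\}$ is $\{1,i+1\}$ regardless of the label. One minor remark: you phrase the removal of the diagonal target as happening ``when passing from $\Gamma$ to $\Gamma^{ND}$,'' but in the paper's setup the finite model $\Gamma^F$ is defined directly with vertex set the non-diagonal equivalence classes, so it is cleaner to say simply that $\{1,p+1\}$ is not a vertex of $\Gamma^F$.
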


\begin{proof}
Let $\{i, j\} \in \Sigma_{p,0}$ an equivalence class of unordered pairs. If neither $i$ nor $j$ are divisible by $p$, then the label of $\{i, j\}$ 
is the same in $\mathcal{W}_1$ and $\mathcal{W}_2$, hence the outgoing edges from $\{i, j\}$ are the same in $\Gamma_1^F$ and $\Gamma_2^F$. Suppose on the other hand that $i \equiv 0 \mod p$ (hence, $j \not\equiv 0 \mod p$ because the pair is non-diagonal). Then, if the pair $\{p, j\}$ is non-separated, then its only outgoing edge goes to 
$\{p+1, j+1\} = \{1, j+1\}$ in $\Sigma_{p, 0}$. On the other hand, if $\{p, j\}$ is separated, then its two possible outgoing edges are 
$\{1, p+1\}$ and $\{1, j+1\}$. However, the pair $\{1, p+1\}$ is diagonal, hence no vertex in the graph has such label. Thus, independently 
of whether $\{p, j\}$ is separated, it has exactly one outgoing edge with target $\{1, j+1\}$, proving the claim. 
\end{proof}

\section{Application to core entropy} \label{S:core}

\subsection{From external angles to wedges}
We shall now apply the theory of labeled wedges to the core entropy.
As we have seen in Section \ref{S:algo}, Thurston's algorithm
allows one to compute the core entropy for periodic angles $\theta$; 
in order to interpolate between periodic angles of different periods, we shall now define for \emph{any}
angle $\theta \in \mathbb{R}/\mathbb{Z}$
%. To such $\theta$ (which represents an external angle in the Mandelbrot set) 
%we shall now associate 
a labeled wedge $\mathcal{W}_\theta$, and thus an infinite graph $\Gamma_{\theta}$
as described in the previous sections.

Recall that for each $i \geq 1$ we denote $x_i(\theta) := 2^{i-1} \theta \mod 1$, which we see as a point in $S^1$.
For each pair $(i, j)$ which belongs to $\Sigma$, we label $(i, j)$ as \emph{non-separated} if $x_i(\theta)$ and $x_j(\theta)$ 
belong to the same element of the partition $P_\theta$, or at least one of them lies on the boundary of the partition. 
If instead $x_i(\theta)$ and $x_j(\theta)$ belong to the interiors of two different intervals of $P_\theta$, then we label the 
pair $(i, j)$ as \emph{separated}. The labeled wedge just constructed will be denoted $\mathcal{W}_\theta$, and its associated 
graph $\Gamma_\theta$. The main quantity we will work with is the following:

\begin{definition}
For each external angle $\theta \in \mathbb{R}/\mathbb{Z}$, we define its \emph{growth rate} $r(\theta)$ to be the 
growth rate of the (infinite) graph $\Gamma_\theta$ associated to the labeled wedge $\mathcal{W}_\theta$ constructed above.
\end{definition}

In this section we shall prove the following theorem, which immediately implies Theorem \ref{T:main}:

\begin{theorem} \label{T:newmain}
The function $\theta \to \log r(\theta)$ is continuous, and coincides with the core entropy $h(\theta)$ for rational values of $\theta$.
\end{theorem}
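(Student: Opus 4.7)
The strategy splits into two parts: identifying $\log r(\theta)$ with $h(\theta)$ when $\theta$ is rational, and then establishing continuity of $\theta \mapsto \log r(\theta)$ on all of $\mathbb{R}/\mathbb{Z}$.

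For the rational case, suppose $\theta$ has period $p$ and pre-period $q$ under doubling. Then the labeled wedge $\mathcal{W}_\theta$ is periodic of the same period and pre-period: two indices with $i \equiv_{p,q} k$ yield the same postcritical angle $x_i(\theta)=x_k(\theta)$, so equivalent pairs share labels, and every diagonal pair is automatically non-separated because $x_i(\theta)=x_j(\theta)$ lies in a single partition element. By Proposition \ref{P:finiteinfinite} the growth rate $r(\theta)$ coincides with that of the finite model $\Gamma_\theta^F$, whose vertex set $\Sigma_{p,q}$ and outgoing-edge rules match exactly the transition rules defining Thurston's matrix $M_\theta$ on unordered pairs of postcritical angles (Section \ref{S:algo}). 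Lemma \ref{L:PerronEig} identifies this growth rate with the Perron eigenvalue $\lambda$ of $M_\theta$, and Theorem \ref{T:algo} then yields $\log r(\theta) = \log \lambda = h(\theta)$.

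For continuity I would apply Lemma \ref{L:contwedge}. Given $\theta_n \to \theta$ and a pair $(i,j)\in\Sigma$, the label at $\theta'$ depends continuously on $\theta'$ except when some $x_i(\theta')$ lands on the partition boundary $\{\theta'/2,(\theta'+1)/2\}$. The equation $x_i(\theta') = \theta'/2$ forces $x_{i+1}(\theta')=\theta'=x_1(\theta')$, which can occur only when $\theta$ is rational, purely periodic, and $i\equiv 0 \pmod p$ (symmetrically for the other boundary). Therefore at every $\theta$ that is irrational or strictly pre-periodic, the label of each pair is locally constant in $\theta$, so $\mathcal{W}_{\theta_n}\to\mathcal{W}_\theta$ in the sense required, and Lemma \ref{L:contwedge} immediately gives $r(\theta_n)\to r(\theta)$.

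The main obstacle is continuity at purely periodic rational $\theta$, where labels of pairs $(i,j)$ with $i$ or $j$ divisible by $p$ can depend on the direction of approach. Here I would pass by a diagonal extraction to a subsequence $\theta_{n_k}$ along which the labeled wedges stabilize on every finite window, producing a limit labeled wedge $\widehat{\mathcal{W}}$ that agrees with $\mathcal{W}_\theta$ on every pair neither of whose coordinates is divisible by $p$. Both $\widehat{\mathcal{W}}$ and $\mathcal{W}_\theta$ are purely periodic of period $p$ and therefore satisfy the hypothesis of Lemma \ref{L:twoperiodics}, so their finite models are isomorphic and their growth rates coincide. Applying Lemma \ref{L:contwedge} to the subsequence gives $r(\theta_{n_k})\to r(\widehat{\mathcal{W}}) = r(\mathcal{W}_\theta)=r(\theta)$, and since every subsequence of $(\theta_n)$ admits such a further subsequence we conclude $r(\theta_n)\to r(\theta)$, completing the proof.
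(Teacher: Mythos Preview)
Your argument tracks the paper's proof closely: the identification with $h(\theta)$ for rational $\theta$ is exactly Theorem~\ref{T:coincide}, and your treatment of the non-purely-periodic case is Proposition~\ref{P:notperiodic}. The subsequence framing at purely periodic $\theta$ is a mild repackaging of the one-sided-limit argument in Proposition~\ref{P:bothsides}.

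There is, however, one genuine gap. You assert that the subsequential limit wedge $\widehat{\mathcal{W}}$ is purely periodic of period $p$, but you give no justification, and Lemma~\ref{L:twoperiodics} requires this hypothesis. The issue is that the approximating angles $\theta_{n_k}$ need not themselves have period $p$, so periodicity of $\widehat{\mathcal{W}}$ is not automatic. What must be checked is that for $\theta'$ close to $\theta$ on a fixed side, \emph{all} the boundary-hitting iterates $x_{pk}(\theta')$, $k\geq 1$, land on the \emph{same} side of the partition $P_{\theta'}$; otherwise the limiting labels on pairs involving different multiples of $p$ could be inconsistent, and $\widehat{\mathcal{W}}$ would fail to be periodic. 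This is precisely the content of Lemma~\ref{L:diffpairs} in the paper, proved by comparing derivatives: each map $\theta'\mapsto x_{pk}(\theta')$ has slope $2^{pk-1}$, strictly larger than the slope $\tfrac12$ of the boundary maps $\varphi_i(\theta')=\tfrac{\theta'+i}{2}$, so all $x_{pk}(\theta')$ cross the boundary in the same direction at $\theta$. Once you supply this argument, your compactness route goes through and is equivalent to the paper's.
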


In order to prove the theorem, we shall first prove that if $\theta$ is periodic, the labeled wedge $\mathcal{W}_\theta$ is periodic of the same period and pre-period.

\begin{lemma} \label{L:periodic}
If $\theta$ is periodic for the doubling map of period $p$ and pre-period $q$, then the labeled wedge $\mathcal{W}_\theta$ is periodic 
of the same period and pre-period. 
\end{lemma}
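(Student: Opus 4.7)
The plan is to unpack the definitions and observe that the equivalence relation $\equiv_{p,q}$ on indices is precisely the equivalence induced by the map $i \mapsto x_i(\theta)$.

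The key observation is: if $i \equiv_{p,q} j$, then $x_i(\theta) = x_j(\theta)$ as points in $S^1$. This is immediate from the hypothesis on $\theta$: if $\min\{i,j\} \le q$ then $i = j$ by definition, so the statement is trivial; and if $\min\{i,j\} \ge q+1$ with $i \equiv j \pmod p$, then $x_i(\theta) = 2^{i-1}\theta$ and $x_j(\theta) = 2^{j-1}\theta$ differ by applying the doubling map a multiple of $p$ times to a point in the periodic cycle $\{2^q\theta, 2^{q+1}\theta, \ldots, 2^{p+q-1}\theta\}$, which has period exactly $p$ by hypothesis.

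Given this, both conditions in the definition of a periodic labeled wedge are easy. For the first: the label of $(i,j)$ depends only on the unordered pair of points $\{x_i(\theta), x_j(\theta)\} \subset S^1$ and its position relative to the partition $P_\theta$; so if $\{i,j\} \equiv_{p,q} \{k,l\}$ (as unordered pairs, meaning either $i \equiv_{p,q} k, j \equiv_{p,q} l$ or $i \equiv_{p,q} l, j \equiv_{p,q} k$), then $\{x_i(\theta), x_j(\theta)\} = \{x_k(\theta), x_l(\theta)\}$ and the labels must agree. For the second: if $i \equiv_{p,q} j$ with $i \ne j$ (which is forced since pairs in $\Sigma$ satisfy $i < j$), then necessarily $\min\{i,j\} \ge q+1$ and $i \equiv j \pmod p$, so $x_i(\theta) = x_j(\theta)$; two equal points trivially lie in the same element of the partition $P_\theta$ (or on its boundary), so by the definition of $\mathcal{W}_\theta$ the pair $(i,j)$ is labeled non-separated.

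There is no real obstacle in this proof; it is a direct bookkeeping verification once one observes that $\equiv_{p,q}$ is engineered to match the identifications $x_i = x_j$ coming from the doubling dynamics. The only small point to be careful about is the boundary case where $x_i(\theta)$ or $x_j(\theta)$ lands on $\theta/2$ or $(\theta+1)/2$, but the definition of non-separated explicitly accommodates this, so it causes no trouble.
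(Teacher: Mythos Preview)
Your proof is correct and follows essentially the same approach as the paper's own proof: both verify the two conditions defining a periodic labeled wedge by observing that $i \equiv_{p,q} j$ implies $x_i(\theta) = x_j(\theta)$, so that labels depend only on $\equiv_{p,q}$-classes and diagonal pairs are automatically non-separated. Your write-up is simply a more detailed unpacking of the paper's two-sentence argument.
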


\begin{proof}
Let $\theta$ have period $p$ and pre-period $q$. 
By definition of the equivalence relation $\equiv_{p,q}$, we have $i \equiv_{p,q} j$ if and only if $2^{i-1}\theta \equiv 2^{j-1} \theta \mod 1$, 
which proves the first condition in the definition of periodic wedge. Moreover, if $(i, j)$ is a diagonal pair, then $x_i(\theta) \equiv x_j(\theta)$, 
hence the pair $(i, j)$ is non-separated, verifying the second condition.
\end{proof}

Using the results of the previous section, we are now ready to prove that the logarithm of the growth rate of $\Gamma_\theta$ coincides with the core entropy for rational angles, proving the first part of Theorem \ref{T:newmain}.

\begin{theorem}  \label{T:coincide}
Let $\theta$ a rational angle. Then the growth rate $r(\theta)$ of the infinite graph $\Gamma_\theta$ coincides with the logarithm of the core entropy: 
$$h(\theta) = \log r(\theta).$$
\end{theorem}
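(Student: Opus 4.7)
The plan is to assemble the theorem by chaining together four ingredients from earlier in the paper, which together identify $\log r(\theta)$ with the logarithm of the Perron eigenvalue of Thurston's matrix $M_\theta$. First, I invoke Lemma \ref{L:periodic} to conclude that for rational $\theta$ of period $p$ and pre-period $q$, the labeled wedge $\mathcal{W}_\theta$ is periodic with the same $p$ and $q$. This makes available the finite-model machinery of Section \ref{S:finitemodel}: by Proposition \ref{P:finiteinfinite}, the infinite graph $\Gamma_\theta$ and its finite model $\Gamma_\theta^F$ have the same growth rate. Since $\Gamma_\theta^F$ is a finite graph, Lemma \ref{L:PerronEig} then gives that $r(\Gamma_\theta^F)$ is exactly the largest real eigenvalue of its adjacency matrix.

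Second, I would verify that the adjacency matrix of $\Gamma_\theta^F$ coincides with Thurston's matrix $M_\theta$ from Section \ref{S:algo}. The vertex set of $\Gamma_\theta^F$ is the set of $\equiv_{p,q}$-equivalence classes of non-diagonal unordered pairs in $\{1,\dots,p+q\}^2$. Since $x_i(\theta) = x_j(\theta)$ holds if and only if $i \equiv_{p,q} j$, these equivalence classes are in natural bijection with the unordered pairs $\{x_i, x_j\}$ of distinct postcritical angles in $\mathcal{O}$, i.e.\ with the basis $V_\mathcal{O}$ of $M_\theta$. The labeling of pairs as separated or non-separated is defined identically in both constructions, and the transition rules match verbatim: a non-separated $\{i,j\}$ has the single outgoing edge $\{i,j\}\to\{i+1,j+1\}$, matching $M_\theta(\{x_i,x_j\}) = \{x_{i+1},x_{j+1}\}$, while a separated $\{i,j\}$ has outgoing edges to $\{1,i+1\}$ and $\{1,j+1\}$, matching $M_\theta(\{x_i,x_j\}) = \{x_1,x_{i+1}\} + \{x_1,x_{j+1}\}$. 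The convention in Thurston's setup that $\{x,y\} = 0$ when $x = y$ corresponds exactly to excluding diagonal pairs from the vertex set of $\Gamma_\theta^F$; this is consistent because, as observed in step (1) of the proof of Proposition \ref{P:finiteinfinite}, diagonal vertices support no closed paths and hence do not affect the growth rate.

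Given this identification, the largest real eigenvalue of the adjacency matrix of $\Gamma_\theta^F$ is the same $\lambda$ appearing in Theorem \ref{T:algo}. Combining the chain
\[
r(\theta) = r(\Gamma_\theta) = r(\Gamma_\theta^F) = \lambda
\]
with Thurston's identity $h(\theta) = \log \lambda$ from Theorem \ref{T:algo} yields $h(\theta) = \log r(\theta)$.

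The main obstacle I expect is the careful bookkeeping for the identification in the second paragraph, particularly ensuring that the equivalence relation $\equiv_{p,q}$ on index pairs correctly mirrors the identifications among postcritical angles and that the treatment of diagonal pairs (which are dropped in $\Gamma_\theta^F$ but set to zero in $M_\theta$) does not alter the leading eigenvalue. Once this bijection of labeled directed graphs is pinned down rigorously — which is routine but requires distinguishing the periodic and pre-periodic cases since the equivalence $\equiv_{p,q}$ behaves differently on the pre-periodic indices $\{1,\dots,q\}$ — the theorem follows immediately from stitching together Lemma \ref{L:periodic}, Proposition \ref{P:finiteinfinite}, Lemma \ref{L:PerronEig}, and Theorem \ref{T:algo}.
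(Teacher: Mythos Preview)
Your proposal is correct and follows essentially the same route as the paper: identify the adjacency matrix of the finite model $\Gamma_\theta^F$ with Thurston's matrix $M_\theta$, then chain together Proposition \ref{P:finiteinfinite}, Lemma \ref{L:PerronEig}, and Theorem \ref{T:algo}. The paper's proof is terser (it compresses your second paragraph into the phrase ``by unraveling the definitions'') and does not explicitly cite Lemma \ref{L:periodic}, but the logical structure is the same.
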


\begin{proof}
Let $\theta$ a rational angle, $\Gamma_\theta$ the infinite graph associated to the labeled wedge $\mathcal{W}_\theta$, 
and $\Gamma^F_\theta$ the finite model of $\Gamma_\theta$ as described in section \ref{S:finitemodel}
By unraveling the definitions, the matrix $M_\theta$ constructed in section \ref{S:algo} is exactly the adjacency matrix of the 
finite model $\Gamma^F_\theta$.
By Proposition \ref{P:finiteinfinite}, the growth rate of $\Gamma_\theta$ coincides with the growth rate of its finite model
$\Gamma^F_\theta$. Moreover, by Lemma \ref{L:PerronEig}, the growth rate of $\Gamma^F_\theta$ coincides with the largest real eigenvalue
of its adjacency matrix, that is the largest real eigenvalue of $M_\theta$. Thus, by Theorem \ref{T:algo} its logarithm is the core entropy $h(\theta)$.
\end{proof}

\subsection{Continuity of core entropy}

We shall now prove the second part of Theorem \ref{T:newmain}, namely that the function $r(\theta)$ is continuous.

%For each $\theta \in \mathbb{R}/\mathbb{Z}$ we can now construct the labeled wedge $\mathcal{W}_\theta$, and let $r(\theta)$ be the 
%growth rate of its associated infinite graph. 

\begin{proposition} \label{P:notperiodic}
For each angle $\theta \in \mathbb{R}/\mathbb{Z}$, the one-sided limits 
$$r(\theta^+) := \lim_{\theta' \to \theta^+} r(\theta'), \qquad r(\theta^-) := \lim_{\theta' \to \theta^-} r(\theta')$$ 
exist. Moreover, if $\theta$ is not purely periodic we have $r(\theta^+) = r(\theta^-) = r(\theta)$ .
\end{proposition}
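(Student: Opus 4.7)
The plan is to reduce the statement to the convergence of labeled wedges, and then apply Lemma \ref{L:contwedge}. For each pair $(i,j) \in \Sigma$, the label $\mathcal{W}_\theta(i,j)$ is determined by the position of $x_i(\theta) = 2^{i-1}\theta \bmod 1$ and $x_j(\theta) = 2^{j-1}\theta \bmod 1$ relative to the partition boundary $\{\theta/2,(\theta+1)/2\}$, so it can change only at those $\theta$ where $x_i(\theta)$ or $x_j(\theta)$ meets the boundary. A direct computation identifies these exceptional $\theta$ with the solutions of $(2^i-1)\theta \in \mathbb{Z}$, i.e., with purely periodic rationals whose period divides $i$.

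If $\theta_0$ is not purely periodic, the previous observation shows that no $x_i(\theta_0)$ lies on the boundary of $P_{\theta_0}$. By continuity of the maps $\theta \mapsto x_i(\theta)$ and of the boundary points, each individual label $\mathcal{W}_\theta(i,j)$ is therefore constant on some neighborhood of $\theta_0$. Hence for every finite $S \subset \Sigma$ the restrictions $\mathcal{W}_\theta|_S$ and $\mathcal{W}_{\theta_0}|_S$ agree whenever $\theta$ is close enough to $\theta_0$; this is exactly the convergence $\mathcal{W}_\theta \to \mathcal{W}_{\theta_0}$, and Lemma \ref{L:contwedge} yields $r(\theta_0^+) = r(\theta_0^-) = r(\theta_0)$.

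If $\theta_0$ is purely periodic of period $p$, the exceptional pairs are those $(i,j)$ with $i$ or $j$ divisible by $p$, because only for these indices does $x_i(\theta_0)$ sit on the partition boundary (coinciding with $x_p(\theta_0)$). For such an index, writing $b(\theta) \in \{\theta/2,(\theta+1)/2\}$ for the boundary point close to $x_i(\theta_0)$, one has
\[
x_i(\theta) - b(\theta) = \bigl(2^{i-1} - \tfrac{1}{2}\bigr)(\theta - \theta_0) + o(\theta - \theta_0),
\]
and since $2^{i-1} - \tfrac{1}{2} > 0$ for every $i \geq 1$, the side of the boundary on which $x_i(\theta)$ lands is determined by the sign of $\theta - \theta_0$, uniformly in $i$. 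Consequently, for each finite $S \subset \Sigma$ the labels $\mathcal{W}_\theta|_S$ stabilize as $\theta \to \theta_0^+$ (and separately as $\theta \to \theta_0^-$), giving well-defined limit labeled wedges $\mathcal{W}^\pm$. A final application of Lemma \ref{L:contwedge} produces $r(\theta_0^\pm) = r(\mathcal{W}^\pm)$. The only real point to verify is this uniform sign statement, which governs infinitely many pairs simultaneously; but it is immediate once one observes that $2^{i-1}$ exceeds $1/2$ for every $i \geq 1$, so no serious obstacle arises.
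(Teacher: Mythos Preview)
Your proof is correct and follows essentially the same route as the paper: both arguments show that each label $\mathcal{W}_{\theta'}(i,j)$ stabilizes as $\theta' \to \theta^\pm$ by comparing the derivatives of $x_i(\theta') = 2^{i-1}\theta'$ and of the boundary maps $\theta' \mapsto \theta'/2,\ (\theta'+1)/2$, and then invoke Lemma~\ref{L:contwedge}. Your identification of the exceptional angles as solutions of $(2^i-1)\theta \in \mathbb{Z}$ and the explicit first-order expansion are a bit more detailed than the paper's version, but the underlying idea is identical; the only cosmetic remark is that the $o(\theta-\theta_0)$ term is superfluous since both $x_i$ and $b$ are affine.
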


\begin{proof}
As $\theta' \to \theta^+$, the labeled wedge $\mathcal{W}_{\theta'}$ stabilizes. In fact, for each $i$ consider the position of $x_i(\theta) := 2^{i-1} \theta \mod 1$
with respect to the partition $P_\theta$. If $x_i(\theta) \not \equiv \theta/2, (\theta+1)/2 \mod 1$, then, for all $\theta'$ in a neighborhood of $\theta$, $x_i(\theta')$ lies on
the same side of the partition. Otherwise, note that for any $i \in \{0,1\}$ and $j \geq 1$, 
 the functions %$\varphi_i(\theta) := \frac{\theta}{2} + \frac{i}{2} \mod 1$
$\varphi_i(\theta) := \frac{\theta}{2} + \frac{i}{2} \mod 1$ and $x_j(\theta) := 2^{j-1}\theta\mod 1$ are both continuous 
and orientation preserving but have different derivative: thus, for all $\theta' > \theta$ close enough to $\theta$, the point $x_i(\theta')$ lies on one side of the partition, 
and for $\theta' < \theta$ close enough to $\theta$ lies on the other side. Thus, the limits 
$$\mathcal{W}_{\theta^+} := \lim_{\theta' \to \theta^+} \mathcal{W}_\theta, \qquad 
%$ and 
\mathcal{W}_{\theta^-} := \lim_{\theta' \to \theta^-} \mathcal{W}_{\theta'}$$
 exist, and are both equal to $\mathcal{W}_{\theta'}$ if $\theta$ is not purely periodic, because then no $x_i(\theta)$ with $i \geq 1$ lies on the boundary of the 
 partition. The claim then 
follows by Lemma \ref{L:contwedge}.
\end{proof}

%\subsection{The algorithm}

%Let $\theta \in \mathbb{Q}/\mathbb{Z}$. Then the orbit of $\theta$ under the doubling map $ x \mapsto 2x \mod 1$ is finite, hence there exist 
%integers $p \geq 1$ and $q \geq 0$ such that $2^{q}\theta = 2^{q+p}\theta \mod 1$ while $2^i\theta \neq 2^j \theta \mod 1$ 
%for $0 \leq i  < j \leq p+q-1$.
This proves continuity of $r(\theta)$ at all angles which are not purely periodic. We shall now deal with the purely periodic case, 
where for the moment we only know that the left-hand side and right-hand side limits of $r(\theta)$ exist.

\begin{lemma} \label{L:diffpairs}
Let $\theta$ be purely periodic of period $p$. Then $\mathcal{W}_\theta$, $\mathcal{W}_{\theta^+}$ and $\mathcal{W}_{\theta^-}$
are purely periodic of period $p$, and differ only in the labelings of pairs $(i, j)$ with either $i \equiv 0 \mod p$ or $j \equiv 0 \mod p$. 
\end{lemma}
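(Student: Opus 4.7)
The plan rests on a single crucial observation: $x_i(\theta) = 2^{i-1}\theta \bmod 1$ lies on the boundary of the partition $P_\theta$ (i.e. $x_i(\theta) \in \{\theta/2,\,(\theta+1)/2\}$) if and only if $p \mid i$. Indeed, $2^{i-1}\theta \equiv \theta/2 \bmod 1$ is equivalent to $2^i\theta \equiv \theta \bmod 1$, which by pure periodicity of period $p$ holds iff $p \mid i$; the same argument applies to the other boundary point. Pure periodicity of $\mathcal{W}_\theta$ itself is then immediate from Lemma \ref{L:periodic} with $q = 0$.

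For pairs $(i,j) \in \Sigma$ with $i, j \not\equiv 0 \bmod p$, both $x_i(\theta)$ and $x_j(\theta)$ lie in the \emph{open} intervals of $P_\theta$. Since $\theta' \mapsto x_i(\theta')$ and the two endpoints of $P_{\theta'}$ depend continuously on $\theta'$, there is an open neighborhood of $\theta$ on which the label of $(i,j)$ in $\mathcal{W}_{\theta'}$ is constant. Hence $\mathcal{W}_\theta$, $\mathcal{W}_{\theta^+}$, and $\mathcal{W}_{\theta^-}$ all carry the same label on such pairs; this establishes the agreement-except-on-exceptional-pairs claim, and the periodicity of $\mathcal{W}_{\theta^\pm}$ on these pairs follows from periodicity of $\mathcal{W}_\theta$.

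For pairs involving some index divisible by $p$, I would write $\theta' = \theta + \epsilon$ and, assuming for definiteness $x_p(\theta) = \theta/2$ (the other case is symmetric), compute directly
$$x_i(\theta') - \frac{\theta'}{2} \;=\; \Bigl(2^{i-1} - \tfrac{1}{2}\Bigr)\epsilon \;=\; \frac{2^i - 1}{2}\,\epsilon$$
for every $i$ with $p \mid i$. Since $2^i - 1 \geq 1$, for small $\epsilon > 0$ every such $x_i(\theta')$ lies strictly to the right of the partition endpoint $\theta'/2$, all on the same side; for $\epsilon < 0$ small, all on the left. Consequently, diagonal pairs $(i,j)$ with both $i, j \equiv 0 \bmod p$ are non-separated in $\mathcal{W}_{\theta^\pm}$; and when exactly one index, say $i$, is divisible by $p$, the other orbit point $x_j(\theta')$ remains in a fixed interval by the previous paragraph, so the label of $(i,j)$ in $\mathcal{W}_{\theta^\pm}$ is determined by the pair $(i \bmod p,\, j \bmod p)$ alone. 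Thus any equivalent pair $\{k,l\}$ with $p \mid k$ and $l \equiv j \bmod p$ receives the same label, completing the verification that $\mathcal{W}_{\theta^+}$ and $\mathcal{W}_{\theta^-}$ are purely periodic of period $p$.

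The only delicate point is the derivative comparison $2^{i-1} > 1/2$: this is precisely what ensures that the orbit points $x_i(\theta')$ outpace the moving partition endpoint $\theta'/2$ and land on a definite side in the one-sided limit, rather than oscillating. But this is automatic for every $i \geq 1$, so no further work is required.
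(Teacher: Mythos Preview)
Your proof is correct and follows essentially the same approach as the paper's: both identify that $x_i(\theta)$ lies on the boundary of $P_\theta$ precisely when $p\mid i$, use continuity for the non-boundary indices, and use the derivative comparison $2^{i-1}>\tfrac12$ to show that all $x_{pk}(\theta')$ land on the same side of the partition for small one-sided perturbations. Your version is somewhat more explicit (computing the displacement $(2^i-1)\epsilon/2$ directly and checking the periodicity conditions case by case), while the paper states the derivative inequality $x_{pk}'(\theta)>\varphi_j'(\theta)$ and leaves the deduction of full periodicity more implicit, but the substance is the same.
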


\begin{proof}
Let $\theta$ be purely periodic of period $p$.
Note that $x_i(\theta)$ lies on the boundary of the partition $P_\theta$ if and only if $i \equiv 0 \mod p$. Thus, for all the pairs $(i, j)$ for which neither component is divisible by $p$, the label of $(i, j)$ is continuous across $\theta$, proving the second statement.
Now we shall show that if for all $\theta' > \theta$ and close enough to $\theta$ one has $x_p(\theta)$ on one side of the partition, 
then also $x_{pk}(\theta)$ is on the same side for each $k \geq 1$. This will prove that $\mathcal{W}_\theta^+$ is purely 
periodic of period $p$. The proof for $\mathcal{W}_\theta^-$ is symmetric, and $\mathcal{W}_\theta$ is purely periodic of period
$p$ by Lemma \ref{L:periodic}.
In order to prove the remaining claim, let us denote $\varphi_i(\theta) := \frac{\theta}{2} + \frac{i}{2}$, $i = 0,1$; 
since $\theta$ is purely periodic, there exists $j \in \{0,1\}$ such that $x_{pk}(\theta) = \varphi_j(\theta)$ 
for all $k \geq 1$. Now, note that for each $k \geq 1$ the derivatives satisfy the inequality 
$x_{pk}'(\theta) > \varphi'_j(\theta)$, 
thus for each $k \geq 1$ there exists $\epsilon > 0$ such that for each $\theta' \in (\theta, \theta+ \epsilon)$ one has 
$x_{pk}(\theta') > \varphi_j(\theta')$,
hence the points $x_{pk}(\theta')$ all belong to the same side of the partition independently of $k$, as required.   
\end{proof}

\begin{proposition} \label{P:bothsides}
If $\theta$ is purely periodic of period $p$, then the (infinite) graphs $\Gamma_\theta$, $\Gamma_{\theta^+}$ and $\Gamma_{\theta^-}$, 
associated respectively to $\mathcal{W}_\theta$, $\mathcal{W}_{\theta^+}$ and $\mathcal{W}_{\theta^-}$
have the same growth rate, i.e. we have the equality
$$r(\theta) = r(\theta^+) = r(\theta^-).$$
\end{proposition}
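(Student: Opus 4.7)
The plan is to reduce the claim to a routine application of the preceding combinatorial lemmas. First I would observe that, by the argument inside the proof of Proposition \ref{P:notperiodic}, the labeled wedges $\mathcal{W}_{\theta'}$ stabilize as $\theta' \to \theta^{\pm}$, in the sense that $\mathcal{W}_{\theta'} \to \mathcal{W}_{\theta^{\pm}}$ in the topology of Lemma \ref{L:contwedge}. Applying that lemma yields
\[
r(\theta^{\pm}) \;=\; r(\mathcal{W}_{\theta^{\pm}}),
\]
where the right-hand side denotes the growth rate of the graph associated to the limit wedge. This reduces the proposition to a purely combinatorial comparison of the three purely periodic wedges $\mathcal{W}_\theta$, $\mathcal{W}_{\theta^+}$, $\mathcal{W}_{\theta^-}$.

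Next I would invoke Lemma \ref{L:diffpairs}, which states that all three wedges are purely periodic of period $p$ and that their labelings can disagree only on pairs $(i,j)$ with $i \equiv 0 \pmod p$ or $j \equiv 0 \pmod p$. Equivalently, for any pair $(i, j)$ with $i, j \not\equiv 0 \pmod p$ the three labels coincide. This is exactly the hypothesis of Lemma \ref{L:twoperiodics}, applied to each of the three pairs drawn from $\{\mathcal{W}_\theta, \mathcal{W}_{\theta^+}, \mathcal{W}_{\theta^-}\}$. That lemma then gives isomorphic finite models, and hence (via Proposition \ref{P:finiteinfinite}, already absorbed into Lemma \ref{L:twoperiodics}) a common growth rate. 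Combining the two displayed equalities yields $r(\theta) = r(\theta^+) = r(\theta^-)$.

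There is no substantive obstacle here: the geometric content has already been isolated in Lemma \ref{L:diffpairs} (the only orbit points that can sit on the partition boundary are those with index divisible by $p$, which is exactly what produces the ambiguity between one-sided approaches), and the combinatorial content has already been isolated in Lemma \ref{L:twoperiodics} (the potential extra target of a separated pair of the form $\{p, j\}$ is the diagonal pair $\{1, p+1\}$, which is pruned away when passing to the finite model). Thus the present proof is really just a verification of hypotheses and an assembly of these pieces, and all that remains is to write it out cleanly.
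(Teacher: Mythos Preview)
Your proposal is correct and follows essentially the same route as the paper: invoke Lemma~\ref{L:diffpairs} to verify the hypothesis of Lemma~\ref{L:twoperiodics}, conclude that the three wedges have isomorphic finite models, and then appeal to Proposition~\ref{P:finiteinfinite} to equate the growth rates. Your write-up is in fact slightly more explicit than the paper's, since you spell out that the identification $r(\theta^{\pm}) = r(\mathcal{W}_{\theta^{\pm}})$ comes from Proposition~\ref{P:notperiodic} via Lemma~\ref{L:contwedge}, a step the paper leaves implicit.
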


\begin{proof}
By Lemma \ref{L:diffpairs} and Lemma \ref{L:twoperiodics},  the graphs $\Gamma_\theta$, $\Gamma_{\theta^+}$ and $\Gamma_{\theta^-}$ 
have the same finite form, hence by Proposition \ref{P:finiteinfinite} they have the same growth rate, proving the claim.
\end{proof}

The Proposition thus completes the proofs of Theorem \ref{T:newmain} and Theorem \ref{T:main}.

%\medskip
\subsection{H\"older continuity}
By making the previous continuity arguments quantitative, we get the following % H\"older continuity.

\begin{proposition}
Let $\theta \in \mathbb{Q}/\mathbb{Z}$ such that $h(\theta) > 0$. Then there exists a neighbourhood $U$
of $\theta$, and positive constants $C, \alpha$ such that 
$$|h(\theta_1)-h(\theta_2)| \leq C |\theta_1 - \theta_2|^\alpha$$
for any $\theta_1, \theta_2 \in U$.
\end{proposition}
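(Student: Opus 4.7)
The plan is to make the continuity arguments of Lemma \ref{L:contwedge} and Proposition \ref{P:bothsides} quantitative, and feed the result into a quantitative version of the Rouch\'e argument at the isolated zero $s(\theta) := r(\theta)^{-1}$ of $P_\theta$. Since $h(\theta) > 0$ one has $s(\theta) < 1$; fix $\rho \in (s(\theta), 1)$ and a disk $D = D(s(\theta), \eta) \subset \{|t| < \rho\}$ in which $P_\theta$ has only the zero at $s(\theta)$, of some multiplicity $m$ (bounded by the size of the finite model $\Gamma^F_\theta$), so that near $s(\theta)$ one has $|P_\theta(t)| \geq c_0 |t - s(\theta)|^m$. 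After shrinking $U$, the same kind of lower bound holds uniformly for $P_{\theta'}$ around its zero $s(\theta') \in D$.

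The quantitative heart of the proof is the estimate: for $\theta_1, \theta_2 \in U$ with $\delta := |\theta_1 - \theta_2|$ small, the labels of $\mathcal{W}_{\theta_1}, \mathcal{W}_{\theta_2}$ coincide on every pair $(i, j)$ with $\max(i, j) \leq N(\delta)$, where $N(\delta) \geq c_1 \log(1/\delta)$ for some $c_1 = c_1(\theta) > 0$. A label can switch between $\theta_1$ and $\theta_2$ only if the interval $[\theta_1, \theta_2]$ contains a ``bad angle'' $\theta^*$ where some $x_i(\theta^*)$ lies on $\{\theta^*/2, (\theta^*+1)/2\}$, i.e.\ $\theta^* = (2n + k)/(2^i - 1)$ with $k \in \{0,1\}$. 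For $\theta$ rational not itself a bad angle at level $i$, the distance from $\theta$ to any such $\theta^*$ is bounded below by $c_\theta \cdot 2^{-i}$, with $c_\theta$ depending on the denominator of $\theta$. So for $\theta$ of positive pre-period one simply chooses $U$ small enough and obtains the logarithmic rate. In the purely periodic case, $\theta$ itself is a bad angle at every level divisible by its period $p$; this is handled via the triangle inequality
\[
|s(\theta_1) - s(\theta_2)| \;\leq\; |s(\theta_1) - s(\theta^\pm)| + |s(\theta^\mp) - s(\theta_2)|,
\]
together with $s(\theta^-) = s(\theta^+) = s(\theta)$ (Proposition \ref{P:bothsides}), reducing everything to comparisons where $\theta_1$ and the one-sided limit lie on a single side of $\theta$; there the previous analysis applies, using Lemmas \ref{L:diffpairs} and \ref{L:twoperiodics}.

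Given the agreement of labels up to level $N(\delta)$, Proposition \ref{P:properties}(1),(3) shows every multi-cycle of length $\leq N(\delta)/2$ has support in the agreeing region, so the Taylor coefficients of $P_{\theta_1}$ and $P_{\theta_2}$ coincide up to degree $\lfloor N(\delta)/2 \rfloor$. Combined with the sub-exponential bound $(2n)^{\sqrt{2n}}$ from Proposition \ref{P:properties}(5), this yields, uniformly for $|t| \leq \rho$,
\[
|P_{\theta_1}(t) - P_{\theta_2}(t)| \;\leq\; 2 \sum_{n > N(\delta)/2} (2n)^{\sqrt{2n}}\, \rho^n \;\leq\; C\,\delta^\alpha, \qquad \alpha := -\tfrac{c_1}{2} \log \rho > 0.
\]
Evaluating at $t = s(\theta_1)$ and using $P_{\theta_1}(s(\theta_1)) = 0$ together with the local lower bound on $P_{\theta_2}$ around $s(\theta_2) \in D$ gives $|s(\theta_1) - s(\theta_2)| \leq C'\,\delta^{\alpha/m}$; since $s$ is bounded away from $0$ on $U$, the identity $h = -\log s$ transfers this into the desired H\"older estimate for $h$.

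The main obstacle is producing the logarithmic lower bound on $N(\delta)$, especially in the purely periodic case: one has to ensure that the label switches at levels $kp$ --- unavoidable whenever $[\theta_1, \theta_2]$ contains $\theta$ --- can be routed around using the one-sided wedges $\mathcal{W}_{\theta^\pm}$, whose finite-model equivalence (Lemma \ref{L:twoperiodics}) forces the common value $s(\theta)$ for the smallest zero and thus makes the triangle inequality above quantitatively effective.
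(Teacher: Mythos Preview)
Your approach is essentially the same as the paper's: logarithmic agreement of labels from a gap estimate on the orbit, Proposition \ref{P:properties}(3),(5) to bound the tail of $P_{\theta_1}-P_{\theta_2}$, a quantitative Rouch\'e/multiplicity argument at the smallest zero, and the purely periodic case handled via the one-sided wedges and Proposition \ref{P:bothsides}.

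The one place you go beyond the paper is in comparing two \emph{arbitrary} $\theta_1,\theta_2\in U$ and asserting a uniform lower bound $|P_{\theta_2}(t)|\ge c_0|t-s(\theta_2)|^m$ for all $\theta_2\in U$. The paper sidesteps both issues by always comparing $\theta_1$ to the fixed rational $\theta$: the gap constant is the finite quantity $C_0=\min\{\,d(x_i(\theta),x_j(\theta)),\,d(x_i(\theta),\theta/2),\,d(x_i(\theta),(\theta+1)/2)\,\}$, which immediately gives that the labels of $(i,j)$ agree for $i,j\le 2n$ whenever $|\theta_1-\theta|\le 2^{-2n}C_0$, and the factorization $P_\theta(s)-P_\theta(s_1)=(s-s_1)^k f(s_1)$ uses only the single fixed function $P_\theta$. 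Your argument ``distance from $\theta$ to any level-$i$ bad angle is $\ge c_\theta 2^{-i}$, so choose $U$ small enough'' only controls intervals containing $\theta$; for a generic interval $[\theta_1,\theta_2]\subset U$ of length $\delta$ the bound $N(\delta)\gtrsim\log(1/\delta)$ does not follow from that inequality alone. This is not a defect relative to the paper --- the paper's own proof, as written, also only yields $|h(\theta_1)-h(\theta)|\le C|\theta_1-\theta|^\alpha$ --- but you should be aware that the extra uniformity you assert would need a separate argument.
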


\begin{proof}
Let $\theta \in \mathbb{Q}/\mathbb{Z}$, and denote by $s = e^{-h(\theta)}$ the smallest real zero of $P_\theta(t)$. 
The proof is a simple quantitative version of Rouch\'e's theorem.
For simplicity, let us assume that $\theta$ is pre-periodic, and 
let $C_0$ be the minimum distance, in $S^1$, between (distinct) elements of the set 
$\{x_i(\theta)\}_{i \geq 1} \cup \{\theta/2, (\theta+1)/2\}$. %$x_i(\theta)$ of the forward orbit of $\theta$. 
Given another angle $\theta_1$, 
denote $s_1 = e^{-h(\theta_1)}$ the smallest real zero of $P_{\theta_1}(t)$, and choose $n$ such that 
\begin{equation} \label{e:hold0}
%2^{-2n} C_0 \cong \theta \\
2^{-2n-2} C_0 \leq |\theta_1 - \theta| \leq 2^{-2n} C_0.
\end{equation}
Then, by looking at how $x_i(\theta)$ moves with respect to the partition $P_\theta$, we realize that, for $i, j \in \{1, \dots, 2n\}$, 
%the points $x_i(\theta)$ and $x_i(\theta_1)$ 
%will lie on the same side of the partition, hence 
the pair $(i, j)$ is separated for $\theta$ if and only if it is separated for $\theta_1$.
By construction, the $n^{th}$ coefficient of $P_\theta(t)$ depends only on multi-cycles of total length $n$, 
and by Proposition \ref{P:properties} (3) all such multi-cycles live on the subgraph $\{(i, j) \ : 1 \leq i < j \leq 2 n \}$, hence 
the first $n$ coefficients of $P_\theta(t)$ and $P_{\theta_1}(t)$ coincide.  Thus we can write 
\begin{equation} \label{e:hold1}
P_{\theta_1}(s_1) - P_{\theta}(s_1) = \sum_{k = n}^\infty  c_k s_1^k \leq C_1 (s')^n
\end{equation}
for some $C_1 > 0$ and $s' \in (s_1, 1)$, where we used that $|c_k| \leq 2(2 k)^{\sqrt{2k}}$ by Proposition \ref{P:properties} (5).
Now, using $P_{\theta}(s) = P_{\theta_1}(s_1) = 0$
we have 
%On the other hand
\begin{equation} \label{e:hold2}
P_{\theta_1}(s_1) - P_{\theta}(s_1) = P_{\theta}(s) - P_{\theta}(s_1) = (s - s_1)^k f(s_1)
\end{equation}
where $k \geq 1$ is the order of zero of $P_\theta$ at $s$ and $f(z)$ is a holomorphic function, with $f(s) \neq 0$.
By combining \eqref{e:hold1}, \eqref{e:hold2} and \eqref{e:hold0}, 
we get 
$$|s-s_1| \leq C_2 (s')^{n/k} \leq C_3 |\theta - \theta_1|^{\alpha}$$
with $\alpha  = \frac{- \log s'}{2k \log 2}$, which  implies the claim as $h(\theta) = -\log s$.
In the periodic case the proof the same, except one should argue separately for $\theta_1 < \theta$
and $\theta_1 > \theta$, and use Proposition \ref{P:bothsides}.
% which yields the claim since $n \cong \log |\theta-\theta_1|/2 %\log 2$ as a consequence of the choice \eqref{e:hold0}. 
\end{proof}

%%%%%%%%%%%%%%%%%%%%%%%%%%%%%%%%%%%%%%%%%%%%%%%%%%%%%%%%%%%%%%%%%%%%%%%%%%%%%%%
\medskip

\noindent 
\textsc{Giulio Tiozzo} \\
\textsc{Yale University} \\
giulio.tiozzo@yale.edu

%%%%%%%%%%%%%%%%%%%%%%%%%%%%%%%%%%%%%%%%%%%%%%%%%%%%%%%%%%%%%%%%%%%%%%%%%%%%%%%

\end{document}